\author{Peter J. Brockwell \thanks{Departments of Statistics, Columbia University, New York and Colorado State University, Fort Collins,
USA,
email: pjb2141@columbia.edu }
\and
Vincenzo Ferrazzano
\thanks{Center for Mathematical Sciences, Technische Universit\"at M\"unchen, Boltzmannstr. 3, 85748 Garching b. M\"unchen, Germany,
email: ferrazzano@ma.tum.de, http://www-m4.ma.tum.de/pers/ferrazzano/}
\and
Claudia Kl\"uppelberg\thanks{Center for Mathematical Sciences, and Institute for Advanced Study, Technische Universit\"at M\"unchen, Boltzmannstr. 3, 85748 Garching b. M\"unchen, Germany,
email: cklu@ma.tum.de, http://www-m4.ma.tum.de}
}
\title{High-frequency sampling and kernel estimation for continuous-time moving average processes}
\numberwithin{equation}{section}
\newtheorem{thm}{Theorem}[section]
\newtheorem{cor}[thm]{Corollary}
\newtheorem{lem}[thm]{Lemma}
\newtheorem{prop}[thm]{Proposition}
\newtheorem{defn}[thm]{Definition}
\newtheorem{example}[thm]{Example}
\newtheorem{oss}[thm]{Remark}
\newtheorem{fig}[thm]{Figure}
\newcommand{\displayfrac}[2]{\frac{\displaystyle #1}{\displaystyle #2}}
\newcommand{\bthe}{\begin{thm}}
\newcommand{\ethe}{\end{thm}}
\newcommand{\ble}{\begin{lem}}
\newcommand{\ele}{\end{lem}}
\newcommand{\bde}{\begin{defn}}
\newcommand{\ede}{\end{defn}}
\newcommand{\bco}{\begin{cor}}
\newcommand{\eco}{\end{cor}}
\newcommand{\bpr}{\begin{prop}}
\newcommand{\epr}{\end{prop}}
\newcommand{\bproof}{\begin{proof}}
\newcommand{\eproof}{\end{proof}}
\newcommand{\bexam}{\begin{example}\rm}
\newcommand{\eexam}{\halmos\end{example}}
\newcommand{\brem}{\begin{oss}\rm}
\newcommand{\erem}{\halmos\end{oss}}
\newcommand{\bfi}{\begin{fig}}
\newcommand{\efi}{\end{fig}}
\newcommand{\beao}{\begin{eqnarray*}}
\newcommand{\eeao}{\end{eqnarray*}\noindent}
\newcommand{\beam}{\begin{eqnarray}}
\newcommand{\eeam}{\end{eqnarray}\noindent}
\newcommand{\barr}{\begin{array}}
\newcommand{\earr}{\end{array}}
\newcommand{\beq}{\begin{equation}}
\newcommand{\eeq}{\end{equation}}
\def\bbr{{\Bbb R}}
\def\bbc{{\Bbb C}}
\def\bbz{{\Bbb Z}}
\def\bbe{{\Bbb E}}
\def\calr{{\mathcal{R}}}
\newcommand{\dsum}{\displaystyle\sum}
\newcommand{\al}{{\alpha}}
\newcommand{\la}{{\lambda}}
\newcommand{\si}{{\sigma}}
\newcommand{\CARMA}{{\rm CARMA}}
\newcommand{\FICARMA}{{\rm FICARMA}}
\newcommand{\AR}{{\rm AR}}
\newcommand{\CMA}{{\rm CMA}}
\newcommand{\ov}{\overline}
\newcommand{\wh}{\widehat}
\def\halmos{\hfill $\Box$  \medskip }
\begin{document}
\maketitle

\begin{abstract}
Interest in continuous-time processes has increased rapidly in recent years, largely because of high-frequency data available in many applications.  We develop a method for estimating the kernel function $g$ of a second-order stationary L\'evy-driven continuous-time moving average (CMA) process $Y$ based on observations of the discrete-time process $Y^\Delta$ obtained by sampling $Y$ at $\Delta, 2\Delta,\ldots,n\Delta$ for small $\Delta$.  We approximate $g$ by $g^\Delta$ based on the Wold representation  and prove its pointwise convergence to $g$ as $\Delta\rightarrow 0$ for $\CARMA(p,q)$ processes.  Two non-parametric estimators of $g^\Delta$, based on the innovations algorithm and the Durbin-Levinson algorithm, are proposed to estimate $g$. For a Gaussian CARMA process we give conditions on the sample size $n$ and the grid-spacing $\Delta(n)$ under which the innovations estimator is consistent and asymptotically normal as $n\rightarrow\infty$.
The estimators can be calculated from sampled observations of {\it any} CMA process and simulations suggest that they perform well even outside the class of CARMA processes.  
We illustrate their performance for simulated data
and apply them to the Brookhaven turbulent wind speed data.  
Finally we extend results of \citet{bfk:2011:1} for sampled CARMA processes to a much wider class of CMA processes.
\end{abstract}
\bigskip

\noindent
\begin{tabbing}
{\em AMS 2000 Subject Classifications:} {\small Primary: 60G12, 62M10, 62M15; Secondary: 60G10, 60G25, 62M20.}
\end{tabbing}

\medskip

\noindent{\em Keywords:} CARMA process, continuous-time moving average process, FICARMA process, high frequency data, kernel estimation, regular variation, spectral theory, turbulence, Wold representation.

\newpage

\section{Introduction}\label{s1}

We are concerned in this paper with causal continuous-time moving averages of the form
\beam\label{CMA}
Y_t:=\int_{-\infty}^\infty g(t-s) dL_s,\quad t\in\bbr,
\eeam
where $\{L_t\}_{t\in\bbr}$ is a L\'evy process with $EL_1=0$ and $EL_1^2=\sigma^2<\infty$. The kernel function $g$ is assumed to be square integrable, zero on $(-\infty,0]$
(for causality) and such that the Fourier transform ${\tilde g}(\omega):=\int_{-\infty}^\infty e^{it\omega}g(t)dt$ is non-zero if the imaginary part of $\omega$ is
greater than or equal to $0$.  (This {\it minimum phase} property is the continuous-time analogue of the discrete-time notion of invertibility.)
The process {$Y$} defined by (\ref{CMA}) is then a zero-mean strictly and weakly stationary  process.  For the estimation of $g$ discussed in Sections 4 and 5 we make the additional assumption that $L$ has been standardized so that $\sigma^2=1$, since otherwise  $g$ is identifiable only to within multiplication by a constant.  In the special case when $L$ is Brownian motion the process
(\ref{CMA}) is Gaussian, but by allowing $L$ to be a second-order L\'evy process we greatly expand the class of possible marginal distributions for $Y_t$.

The integral in \eqref{CMA} is understood in the $L^2$-sense and, since we use only second-order properties in our analysis, the results
apply more generally to processes defined by (\ref{CMA}) when $L$ is a process with stationary orthogonal increments such that $EL_1=0$ and $EL_1^2=\sigma^2$
as in  \cite{Doob:1990df} Ch. IX. It is important to note however that when $L$ is a given L\'evy process, $Y$ is completely characterized by $g$, while the spectral density of $Y$ characterizes only the second-order properties.  Throughout this paper, stationarity will always mean weak stationarity.

Examples of \CMA\ processes are the Ornstein-Uhlenbeck process, with $g(t)=e^{\lambda t}{\bf 1}_{(0,\infty)}(t)$, where $\lambda<0$, and the more general class of continuous-time autoregressive moving average (\CARMA) processes studied by \citet{doob:1944} for Gaussian $L$.  State-space representations of these processes were exploited by  \cite{Jones2} and \cite{Jones1} for dealing with missing values in time series, and by \cite{brockwell3} for the study of L\'evy-driven  CARMA processes. Long-memory versions have been developed by \cite{BroMar05} and \cite{Marquardt:2006}.  $\CMA$ processes, and in particular CARMA processes,  constitute a very large class of continuous-time stationary processes, the latter playing the same role in continuous time as do the ARMA processes in discrete time.  Estimation for continuous-time processes has been considered from various points of view by many authors
including  Phillips (1959),  Pham (1977), Robinson (1977)  and Bergstrom (1990).
%A gamma kernel function $g$ corresponds to a process with Whittle-Mat\'ern autocovariance function and spectral density of similar form to the \citeauthor{karman} and  Kaimal spectral %densities, which have been widely employed in turbulence modelling.
%The extremal properties of CMA processes have been studied in great generality by \cite{fasen:2009}.

The present paper was motivated by a study of the Brookhaven turbulence data (see \citet{techEnzo} for a detailed description and further references).
The data consist of twenty million values, sampled at 5000Hz (i.e. 5000 values per second) over a time interval of 4000 seconds.   One of the goals was
to estimate the kernel $g$ in a model of the form (\ref{CMA}) for the underlying continuous-time process from which the observations were sampled.  It is intuitively clear
that the kernel $g$ should be closely related to the coefficients $\psi_j^\Delta$ in the Wold representation,
\begin{equation}\label{Wold1}
Y^\Delta_n=\sum_{j=0}^\infty \psi_j^\Delta Z_{n-j}^\Delta,\quad n\in\bbz,\quad\{Z_n^\Delta\}_{n\in\bbz}\sim{\rm WN}(0,\sigma^2_\Delta),
\end{equation}
of
the sampled process $Y^\Delta=\{Y_{n\Delta}\}_{n\in\mathbb{Z}}$ when the temporal spacing $\Delta$ of the observations is very small.  In Section 3 we make this connection
precise for the class of CARMA processes by showing that, as $\Delta\rightarrow 0$,  the function
\begin{equation}\label{gdeltadef1}
g^\Delta(t):=\sum_{j=0}^\infty{{\sigma_\Delta}\over{\sqrt{\Delta}}}\psi^\Delta_j{\bf 1}_{[j\Delta,(j+1)\Delta)}(t)\end{equation}
converges pointwise to
$\sigma g$ (or to $g$ if $L$ is standardized so that $EL_1^2=1$).  Proof of this result requires a more precise analysis of the relation between the continuous-time
process $Y$ and the sampled sequence $Y^\Delta$ than the one given in  \citet{bfk:2011:1}.  The proposed estimators of $g$ are non-parametric estimators of $g^\Delta$
obtained by either Durbin-Levinson or innovations algorithm estimation of $\psi_j^\Delta$ and $\sigma_\Delta$.  In the important Gaussian case consistency of the innovations estimator is established as Corollary~\ref{cor:const} and a central limit theorem is given as Theorem~\ref{clt}.

Although our analysis focuses on CARMA processes, the estimation algorithms can be applied to the sampled observations of {\it any} CMA process of the form (\ref{CMA}) since they involve only the estimation of $\psi_j^\Delta$ and $\sigma_\Delta$ with $\Delta$ small.  In Section 4 we illustrate the performance of the estimators using simulated data and in Section 5
we apply them to the Brookhaven data discussed above.  The outcome of a detailed statistical analysis of turbulence data is presented in~\citet{fk:2011:1}.

In Section 6 we extend the asymptotic results of  \citet{bfk:2011:1} to a broader class of CMA processes,
which includes fractionally integrated CARMA processes, and discuss their implications for local sample-path behaviour.

We use the following notation throughout:
%We define $\bbr^+:=(0,\infty)$ and $\bbr^+_{0}:=[0,\infty)$.
$\Re(z)$ denotes the real part of the complex number $z$; $B$ denotes the backward shift operator, $BY^\Delta_n:=Y^\Delta_{n-1}$ for $n\in\bbz$.
%We work with high-frequency data, so that we are particularly interested in the behaviour of $Y^\Delta$ as %$\Delta\rightarrow 0$.
$a(\Delta)\sim b(\Delta)$ means $\lim_{\Delta\rightarrow 0} a(\Delta)/b(\Delta) =1$;

\section{The sampled sequence $Y^\Delta$ when $Y$ is a $\CARMA(p,q)$ process}
For non-negative integers $p$ and $q$ such that $q<p$, a $\CARMA(p,q)$ process $Y=\{Y_t\}_{{t\in\mathbb{R}}}$, with
coefficients
$a_{1},\ldots,a_{p}$, $b_{0},\ldots,b_{q}\in\bbr$, and driving L\'evy process $L$,  is defined to be a
strictly stationary solution of the
suitably interpreted formal equation,
\begin{equation}\label{1.1}
a(D)Y_t=b(D)DL_t,\quad t\in\mathbb{R},
\end{equation}
where $D$ denotes differentiation with respect to $t$, $a(\cdot)$ and $b(\cdot)$ are the polynomials,
$$a(z):=z^{p}+a_{1}z^{p-1}+\cdots+a_{p}\quad\mbox{and}\quad
b(z):=b_{0}+b_{1}z+\cdots+b_{p-1}z^{p-1},$$
and the
coefficients $b_{j}$ satisfy $b_{q}=1$ and $b_{j}=0$ for $q<j<p$.
The polynomials $a(\cdot)$ and $b(\cdot)$ are assumed to have no common zeroes and, for $Y$ to be causal and minimum phase, their zeroes all
lie in the open left half plane.   The L\'evy process $L$ is assumed throughout to satisfy $EL_t=0$ and Var$(L_t)=\sigma^2|t|$ for all $t\in\mathbb{R}$.

 The kernel (see  \cite{BrLi})  is
\begin{equation}\label{CARMAkernel1}
g(t)={{1}\over{2\pi i}}\int_\rho {{b(z)}\over{a(z)}}e^{tz}dz~{\bf 1}_{(0,\infty)}(t)
=\sum_\lambda{\rm Res}_{z=\lambda}\left(e^{tz}{{b(z)}\over{a(z)}}\right){\bf 1}_{(0,\infty)}(t),\end{equation}
where
the integration is anticlockwise around any simple closed curve $\rho$ in the interior of the left half of the complex plane, encircling the distinct zeroes $\lambda$ of $a(z)$, and
${\rm Res}_{z=\lambda}(f(z))$ denotes the residue of the function $f$ at $\lambda$.

The spectral density $f_\Delta$ of the sampled process $Y^\Delta$ is (see \citet{bfk:2011:1}, Section 2)
\begin{equation}\label{sampledCARMAspec}f_{\Delta}(\omega)={{-\sigma^2}\over{4\pi^2i}}\int_\rho {{b(z)b(-z)}\over{a(z)a(-z)}}{{\sinh(\Delta z)}\over{\cosh(\Delta z)-\cos(\omega)}}dz,~-\pi\le\omega\le \pi,\end{equation}where the integral, as in (\ref{CARMAkernel1}), is anticlockwise around any simple closed curve $\rho$  in the interior of the left half of the complex plane, enclosing the zeroes of $a(z)$.  It is well-known (see e.g.  \cite{BrLi}) that the sampled process $Y^\Delta$ satisfies the ARMA equations,
\begin{equation}\phi^\Delta(B)Y^\Delta_n=\theta^\Delta(B)Z^\Delta_n, ~n\in\mathbb{Z},\label{ARMA}\end{equation}where
$B$ is the backward shift operator,  $\phi^\Delta(z)$ is the polynomial,
\begin{equation}\label{ARpol}\phi^\Delta(z)=\prod_{j=1}^p(1-e^{\lambda_j\Delta}z),\quad z\in\bbc,\end{equation}
$\lambda_1,\ldots,\lambda_p$ are the zeroes of the polynomial $a(z)$,
$\theta^\Delta(z)$ is a polynomial of degree less than $p$, whose zeroes can be chosen, by the minimum-phase assumption on $Y$, to lie in the exterior of the unit disc, and $(Z^\Delta_n)_{n\in\mathbb{Z}}$ is an uncorrelated sequence of zero-mean random variables with variance which we shall denote by $\sigma^2_\Delta$.  The Wold representation of the sampled process is then given by (\ref{Wold1}) with $\psi^\Delta_j$ the coefficient of $z^j$ in the power-series expansion,
\begin{equation}\label{PSexpansion}\psi^\Delta(z):=\sum_{j=0}^\infty \psi^\Delta_j z^j={{\theta^\Delta(z)}\over{\phi^\Delta(z)}}\end{equation}

Although, for any given CARMA process it is a trivial matter to write down the autoregressive polynomial $\phi^\Delta(z)$, it is a much more difficult problem to determine
$\theta^\Delta(z)$.
Although $\theta^\Delta(z)$ cannot be written down explicitly (except in the special case when $Y$ is a CARMA(2,1) process) its asymptotic behaviour, as $\Delta\rightarrow 0$ is specified by the following theorem which is proved in the Appendix.  This theorem, with (\ref{ARpol}) and (\ref{PSexpansion}) will determine the asymptotic behaviour of $\psi_j^\Delta$ and $\sigma_\Delta$ as $\Delta\rightarrow 0$, thereby enabling us to establish the pointwise convergence of the Wold approximation $g^\Delta$ in (\ref{gdeltadef1})  to the kernel $g$.

Before stating the theorem we need to introduce some notation.  We first write the autoregressive and moving average CARMA polynomials in \eqref{1.1} as
\begin{equation}a(z)=\prod_{i=1}^p(z-\lambda_i)~~{\rm and}~~b(z)=\prod_{i=1}^q(z-\mu_i),\end{equation}
and define \begin{equation}\label{zeta1}\zeta_k=1+\mu_k\Delta, ~~k=1,\ldots,q.\end{equation}  The coefficients $\xi_i, ~i=1,\ldots,p-q-1$, are defined to be the zeroes of the polynomial
$\alpha(x)$, the coefficient of $z^{2(p-q)-1}$ in the expansion of $\sinh(z)/(\cosh(z)-1+x) $ in powers of $z$.  Finally we define
\begin{equation}\label{eta1}\eta(\xi_i) := \xi_i-1\pm\sqrt{(\xi_i-1)^2-1},~~i=1,\ldots,p-q-1,\end{equation} with the sign chosen so that $|\eta(\xi_i)|<1$.  We can now state the theorem.

\begin{thm}\label{MArepresentation1}
The moving average process $X_n:=\theta^\Delta(B)Z^\Delta_n$ has the asymptotic representation, as $\Delta\rightarrow 0$,
\begin{equation}\label{MArep}X_n=\prod_{i=1}^{p-1-q}(1+(\eta(\xi_i)+o(1))B)\prod_{k=1}^q(1-(\zeta_k+o(\Delta))B)Z^\Delta_n,~~
\{Z^\Delta_n\}_{n\in\bbz}\sim{\rm WN}(0,\sigma^2_\Delta),\end{equation}where
\begin{equation}\sigma^2_\Delta={{\Delta^{2(p-q)-1}e^{-a_1\Delta}\sigma^2}\over{[2(p-q)-1]!\prod_{i=1}^{p-q-1}\eta(\xi_i)\prod_{k=1}^q\zeta_k}}(1+o(1)),\end{equation}with  $\zeta_k$ and $\eta(\xi_i)$  defined as in (\ref{zeta1}) and (\ref{eta1}).\end{thm}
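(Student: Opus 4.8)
The plan is to work from the spectral factorisation of the sampled ARMA process and to locate the zeroes of $\theta^\Delta$ as $\Delta\to0$. Write $N:=p-q$ and, for $|u|=1$, set $u=e^{-i\omega}$, so that $\cos\omega=(u+u^{-1})/2$ and $x:=1-\cos\omega=-(1-u)^2/(2u)$. Since $Y^\Delta$ obeys \eqref{ARMA}, its spectral density satisfies, after continuation to a rational function of $u$,
\begin{equation}\label{Pplan1}
\frac{\sigma_\Delta^2}{2\pi}\,\theta^\Delta(u)\theta^\Delta(u^{-1})=\phi^\Delta(u)\phi^\Delta(u^{-1})f_\Delta(\omega).
\end{equation}
Because $\theta^\Delta$ is minimum phase with $\theta^\Delta(0)=1$, its zeroes are exactly those zeroes of the right-hand side that lie in the exterior of the unit disc (the invertibility convention), the reciprocal zeroes belonging to $\theta^\Delta(u^{-1})$. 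The whole problem is thus to locate the $u$-zeroes of $\phi^\Delta(u)\phi^\Delta(u^{-1})f_\Delta(\omega)$ and to extract $\sigma_\Delta^2$ from the leading coefficient in \eqref{Pplan1}.

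First I would obtain the $\eta(\xi_i)$ from the leading behaviour of the contour integral \eqref{sampledCARMAspec}. Writing $h(z):=b(z)b(-z)/[a(z)a(-z)]$ and expanding the kernel $\sinh(\Delta z)/(\cosh(\Delta z)-1+x)=\sum_{k\ge0}c_{2k+1}(x)(\Delta z)^{2k+1}$ (only odd powers occur, and the coefficient of $z^{2N-1}$ is exactly $\alpha(x)$), one integrates term by term. Since $h$ is even with $h(z)\sim(-1)^Nz^{-2N}$ at infinity, $h(z)z^{2k+1}$ is odd and decays faster than $z^{-1}$ for $k<N-1$, so by the symmetry of its residues about the origin each such contour integral vanishes; the first surviving term is $\Delta^{2N-1}\alpha(x)\,\tfrac1{2\pi i}\int_\rho h(z)z^{2N-1}\,dz$, whose integral equals $(-1)^N/2$. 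Hence $2\pi f_\Delta(\omega)=\tfrac12(-1)^{N+1}\sigma^2\Delta^{2N-1}\alpha(x)(1+o(1))$ with $\alpha(x)=[(2N-1)!]^{-1}x^{-N}\prod_{i=1}^{N-1}(x-\xi_i)$. Substituting $x=-(1-u)^2/(2u)$ turns each factor into $x-\xi_i=-(u+\eta(\xi_i))(u+\eta(\xi_i)^{-1})/(2u)$, using \eqref{eta1} and $\eta\cdot\eta^{-1}=1$; the exterior zero $u=-\eta(\xi_i)^{-1}$ then contributes, after normalisation, the factor $1+\eta(\xi_i)u$ of $\theta^\Delta(u)$, i.e. the term $(1+\eta(\xi_i)B)$ of \eqref{MArep}. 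Note that these roots depend only on $N=p-q$, as the theorem requires.

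The factor $x^{-N}$, together with $\phi^\Delta(u)\phi^\Delta(u^{-1})\to(-1)^p(1-u)^{2p}u^{-p}$, leaves at strict leading order only a zero of order $q$ at $u=1$ on each side (a factor $(1-u)^{2q}$), so the $\zeta_k$ are invisible at this scale — consistent with the fact that the leading integral $(-1)^N/2$ has lost all dependence on the $\mu_k$. To resolve this degenerate zero I would pass to the boundary layer $\omega=O(\Delta)$, i.e. $u=1+O(\Delta)$, where $x=O(\Delta^2)$ and the whole $\Delta$-series must be resummed; equivalently, use the standard aliasing representation $f_\Delta(\omega)=\Delta^{-1}\sum_m f_Y((\omega+2\pi m)/\Delta)$, in which the $m=0$ term dominates and reduces to $\Delta^{-1}\tfrac{\sigma^2}{2\pi}h(i\omega/\Delta)$. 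Its numerator $b(\cdot)b(-\cdot)$ vanishes when $i\omega/\Delta=\pm\mu_k$, i.e. at $u=e^{\mp\mu_k\Delta}=1\mp\mu_k\Delta+o(\Delta)$, giving the $q$ exterior roots $u=\zeta_k^{-1}=(1+\mu_k\Delta)^{-1}$ of $\theta^\Delta(u)$ together with their interior reciprocals $\zeta_k$. This yields the factors $1-\zeta_k u$, hence the terms $(1-(\zeta_k+o(\Delta))B)$ of \eqref{MArep}; the finer $o(\Delta)$ accuracy reflects that the boundary layer captures the $\mu_k\Delta$ term exactly.

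Finally I would read off $\sigma_\Delta^2$ from \eqref{Pplan1} by matching normalisations. Converting each monic quadratic $(u+\eta_i)(u+\eta_i^{-1})$ and $(u-\zeta_k)(u-\zeta_k^{-1})$ to the Wold-normalised factors $(1+\eta_i u)(1+\eta_i u^{-1})$ and $(1-\zeta_k u)(1-\zeta_k u^{-1})$ produces the factors $\prod_i\eta_i$ and $\prod_k\zeta_k$ in the denominator; together with the $(2N-1)!$ from $\alpha$, the $\tfrac12(-1)^{N+1}$ from the integral, the leading coefficient $\prod_j e^{\lambda_j\Delta}=e^{-a_1\Delta}$ of $\phi^\Delta$ (since $\sum_j\lambda_j=-a_1$), and the $\Delta^{2N-1}\sigma^2$ prefactor, this gives the stated $\sigma_\Delta^2$. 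The main obstacle is precisely the two-scale nature of the problem: rigorously resolving the order-$q$ zero at $u=1$ into the $q$ separate roots $\zeta_k=1+\mu_k\Delta$ through the boundary-layer (aliasing) analysis, controlling the $o(1)$ and $o(\Delta)$ remainders uniformly, and matching them to the outer expansion producing the $\eta(\xi_i)$; pinning down the exact constant in $\sigma_\Delta^2$, in particular the $e^{-a_1\Delta}$ factor, is the delicate bookkeeping at the end.
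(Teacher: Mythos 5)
Your outline coincides with the paper's proof in its first half. The paper also expands the contour integral \eqref{sampledCARMAspec} in powers of $\Delta$ (its coefficients $r_j$ and $c_j(\omega)$ in \eqref{PSsampledCARMA} are exactly your residues and kernel coefficients, with $r_{p-q-1}=(-1)^{p-q-1}$ matching your value $(-1)^N/2$ of the left-half-plane integral after accounting for the factor $1/2$ coming from the symmetry of the residues of the odd integrand), identifies $\alpha(x)=\alpha_{p-q-1}(x)$ and its zeroes $\xi_i$ via \eqref{alphaj}, converts $x-\xi_i$ into the quadratic factor $-\frac{1}{2\eta(\xi_i)}(1+\eta(\xi_i)e^{-i\omega})(1+\eta(\xi_i)e^{i\omega})$ exactly as you do, and assembles $\sigma_\Delta^2$ from the same constants $[2(p-q)-1]!$, $\prod_i\eta(\xi_i)$, $\prod_k\zeta_k$, $2^p$ and $e^{-a_1\Delta}$. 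So the $\eta(\xi_i)$ part and the normalisation bookkeeping are essentially the paper's argument.

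Where you genuinely diverge is in producing the $q$ roots $\zeta_k$, and this is also the one place where your plan is incomplete. You resolve the degenerate order-$q$ zero at $u=1$ by a boundary-layer argument on the aliasing sum, perturbing the zeros of $b(i\omega/\Delta)$; this can be made rigorous (near $u=e^{\mu_k\Delta}$ the $m\neq 0$ background is $O(\Delta^{2(p-q)-1})$ against an $m=0$ term with slope of order $\Delta^{2p-2}$ once multiplied by $\phi^\Delta(u)\phi^\Delta(1/u)$, so the roots move by $O(\Delta^{2(p-q)+1})=o(\Delta)$), but you explicitly defer the uniform remainder control and the inner--outer matching, which is the entire difficulty of your route. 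The paper sidesteps the two-scale problem by a purely algebraic device: it multiplies $f_\Delta$ by $\phi^\Delta(u)\phi^\Delta(1/u)$ \emph{before} truncating, observes that the resulting $f_{MA}$ is an exact polynomial in $x=1-\cos\omega$ of degree less than $p$, and computes its subleading coefficients of $x^{q-j}$ exactly from the $r_{p-q-1+j}$ of \eqref{rj}; these are, up to the factor $2^{-(p-q-1+j)}\Delta^{2j}$, the elementary symmetric functions of $\mu_1^2,\ldots,\mu_q^2$, so continuity of the zeroes of a polynomial in its coefficients immediately yields the $q$ roots $x=-\tfrac{1}{2}\mu_k^2\Delta^2(1+o(1))$ and hence $\zeta_k=1+\mu_k\Delta$ as in \eqref{finalPSfMA}, with no asymptotic matching at all. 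If you keep your route you must supply the quantitative perturbation bound above; adopting the paper's polynomial-in-$x$ bookkeeping closes that gap with finite algebra.
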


\brem{\rm (i) The parameters $\zeta_k$ and $\eta(\xi_i)$ may be complex but the moving average operator will have real coefficients because of the
existence of corresponding complex conjugate parameters in the product. \\[2mm]
(ii) \,  The representation in Theorem \ref{MArepresentation1} is a substantial generalization of the one in Corollary 2 of \citet{bfk:2011:1}, since it is not only of higher-order in $\Delta$, but it applies to all $\CARMA(p,q)$ processes, not only to those with $p-q\le 3$.}
\erem

\section{{The convergence of $g^\Delta(t)$ to $\sigma g(t)$ for $\CARMA(p,q)$ processes}}\label{sec:scaling}

In this section we establish the pointwise convergence, as $\Delta\rightarrow 0$, of the Wold approximation $g^\Delta$ to $\sigma g$ when $Y$ is the $\CARMA(p,q)$ process \eqref{1.1}.
(This means that if $L$ is standardized so that $\sigma^2:=EL_1^2=1$ then $g^\Delta$ converges to $g$.)  Before stating the general result,
we illustrate the convergence in the simplest case, namely when $Y$ is a CARMA(1,0) (or stationary Ornstein-Uhlenbeck) process, for which the quantities $\psi^\Delta_j$ and $\sigma_\Delta$
can easily be found explicitly.  The example also illustrates the role of the scale factor $\sigma_\Delta/\sqrt{\Delta}$ which multiplies the Wold coefficients,  $\psi^\Delta_j$
in (\ref{gdeltadef1}).

\bexam[The CARMA(1,0) process]\label{Carma10ex} {This a special case of (\ref{CMA}) with kernel
$$g(t)= e^{\lambda t}{\bf 1}_{(0,\infty)}(t) ~{\rm where}~\lambda<0.$$
The sampled process $Y^\Delta$ is the discrete-time AR(1) process satisfying
$$Y^\Delta_n=e^{\lambda\Delta}Y^\Delta_{n-1}+Z_n^\Delta,\quad n\in\bbz,$$
where (by Lemma~2.1 of \cite{BrLi}) $Z^\Delta=\left\{Z^\Delta_n\right\}_{n\in\bbz}$ is the i.i.d. sequence defined by
$$Z^\Delta_n=\int_{(n-1)\Delta}^{n\Delta}e^{\lambda(n\Delta-u)}dL_u,\quad n\in\bbz.$$In this case it is easy to write down the coefficients $\psi^\Delta_j$ and the white noise variance $\sigma_\Delta^2$ in the Wold representation of $Y^\Delta$.  From well-known properties of discrete-time AR(1) processes, they are $\psi^\Delta_j=e^{j\lambda\Delta}, j=0,1,2,\ldots$, and $\sigma_\Delta^2={{\sigma^2}\over{2\lambda}}(e^{2\lambda\Delta}-1)$.  Substituting these values in the
definition (\ref{gdeltadef1}) we find that
$$g^\Delta(t)=\sum_{j=0}^\infty \sigma\sqrt{{{e^{2\lambda\Delta}-1}\over{2\lambda\Delta}}}
e^{j\lambda\Delta}{\bf 1}_{[j\Delta,(j+1)\Delta)}(t),$$
which converges pointwise to $\sigma g$ as $\Delta\rightarrow 0$.
}
\eexam

The function $g^\Delta$ as specified in (\ref{gdeltadef1}) is well-defined for {\it any} CMA process (\ref{CMA}), and for the corresponding sampled process $Y^\Delta$ there are standard methods for estimating
the Wold parameters $\psi_j^\Delta$ and $\sigma_\Delta$ and hence $g^\Delta$ itself.   For such an estimator of $g^\Delta$  to be useful there are two issues to be considered.  The first is that $g^\Delta$ should be a close approximation to $g$ when $\Delta$ is sufficiently small and the second
concerns the {\it estimation} of $g^\Delta$ from  observations of $Y^\Delta$.  In this section we deal with the first issue by showing that, at least for all CARMA processes,
$g^\Delta$  converges pointwise to
$\sigma g$ as $\Delta\rightarrow 0$. We give the proof
under the assumption that the zeroes $\lambda_1,\ldots,\lambda_p$ of the autoregressive polynomial $a(z)$ all have multiplicity one.
Multiple roots can be handled by supposing them to be separated and letting the separation(s) converge to zero.

The kernel (\ref{CARMAkernel1}) of a causal $\CARMA(p,q)$ process $Y$ whose autoregressive roots each have multiplicity one reduces (see e.g. \cite{BrLi}) to
\begin{equation}\label{CARMAkernel} g(t)=\sum_{j=1}^p{{b(\lambda_j)}\over{a'(\lambda_j)}}e^{\lambda_jt}{\bf 1}_{(0,\infty)}(t), \end{equation}where
$a(z)=\prod_{i=1}^p(z-\lambda_i)$ and $b(z)=\prod_{i=1}^q(z-\mu_i)$ are the autoregressive and moving average polynomials respectively and $a'$ denotes the derivative of the function $a$.  The pointwise convergence of $g^\Delta$ to $g$ is established in the following theorem.

\begin{thm}\label{Woldapprox}
If $Y$ is the $\CARMA(p,q)$ process with kernel (\ref{CARMAkernel}),

(i) the Wold coefficients and white noise variance of the sampled process
$Y^\Delta$ are
\begin{equation}\label{psijdelta}\psi_j^\Delta=\sum_{r=1}^p{{\prod_{i=1}^{p-1-q}(1+(\eta(\xi_i)+o(1))e^{-\lambda_r\Delta})\prod_{k=1}^q(1-(\zeta_k+o(\Delta))e^{-\lambda_r\Delta})}\over{\prod_{m\ne r}(1-e^{(\lambda_m-\lambda_r)\Delta})}}e^{j\lambda_r\Delta},\end{equation}and
\begin{equation}\label{sigmasquared}\sigma^2_\Delta={{\Delta^{2(p-q)-1}e^{-a_1\Delta}\sigma^2}\over{[2(p-q)-1]!\prod_{i=1}^{p-q-1}\eta(\xi_i)\prod_{k=1}^q\zeta_j}}(1+o(1)),\end{equation}with  $\zeta_k$ and $\eta(\xi_i)$ as in (\ref{zeta1}) and (\ref{eta1}) and

(ii) the approximation $g^\Delta$ defined by (\ref{gdeltadef1}) with $\psi_j^\Delta$ and $\sigma^2_\Delta$ as in (\ref{psijdelta}) and (\ref{sigmasquared}) converges pointwise to $\sigma g$  with $g$ as in (\ref{CARMAkernel}).
\end{thm}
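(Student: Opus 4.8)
The plan is to read off part (i) directly from Theorem \ref{MArepresentation1} by a partial-fraction expansion, and then to feed the resulting Wold coefficients into the definition \eqref{gdeltadef1}, reducing the convergence in part (ii) to a single algebraic identity. Throughout I write $N:=p-q$. For (i), Theorem \ref{MArepresentation1} identifies the moving-average polynomial as $\theta^\Delta(z)=\prod_{i=1}^{N-1}(1+(\eta(\xi_i)+o(1))z)\prod_{k=1}^{q}(1-(\zeta_k+o(\Delta))z)$, a proper rational factor of degree $p-1<p$, while \eqref{ARpol} gives $\phi^\Delta(z)=\prod_{m=1}^{p}(1-e^{\lambda_m\Delta}z)$, whose zeros $e^{-\lambda_m\Delta}$ are distinct for $\Delta$ small. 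A partial-fraction expansion of the ratio \eqref{PSexpansion} then yields $\psi^\Delta(z)=\sum_{r=1}^{p}c_r/(1-e^{\lambda_r\Delta}z)$ with $c_r=\theta^\Delta(e^{-\lambda_r\Delta})/\prod_{m\ne r}(1-e^{(\lambda_m-\lambda_r)\Delta})$; expanding each summand as a geometric series gives $\psi_j^\Delta=\sum_{r=1}^{p}c_r e^{j\lambda_r\Delta}$, which is precisely \eqref{psijdelta}, and \eqref{sigmasquared} is copied verbatim from Theorem \ref{MArepresentation1}.

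For (ii) I fix $t>0$ and set $j=j(\Delta):=\lfloor t/\Delta\rfloor$, so that $g^\Delta(t)=(\sigma_\Delta/\sqrt{\Delta})\,\psi_j^\Delta=\sum_{r=1}^{p}(\sigma_\Delta/\sqrt{\Delta})\,c_r\,e^{j\lambda_r\Delta}$ with $e^{j\lambda_r\Delta}\to e^{\lambda_r t}$ because $j\Delta\to t$. The task reduces to computing the limit of $(\sigma_\Delta/\sqrt{\Delta})c_r$ factor by factor as $\Delta\to0$ (the $o(1)$ and $o(\Delta)$ perturbations are harmless since each limiting factor is nonzero, using that $a$ and $b$ share no zeros). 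Using $\zeta_k=1+\mu_k\Delta$ and $e^{-\lambda_r\Delta}=1-\lambda_r\Delta+o(\Delta)$ I would show
\[
\prod_{m\ne r}(1-e^{(\lambda_m-\lambda_r)\Delta})\sim a'(\lambda_r)\Delta^{p-1},\qquad
\prod_{k=1}^{q}(1-\zeta_k e^{-\lambda_r\Delta})\sim b(\lambda_r)\Delta^{q},
\]
while $\prod_{i=1}^{N-1}(1+\eta(\xi_i)e^{-\lambda_r\Delta})\to\prod_{i=1}^{N-1}(1+\eta(\xi_i))$ and, from \eqref{sigmasquared} with $\prod_k\zeta_k\to1$ and $e^{-a_1\Delta}\to1$, $\sigma_\Delta/\sqrt{\Delta}\sim\sigma\,\Delta^{N-1}/\sqrt{(2N-1)!\prod_{i}\eta(\xi_i)}$. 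Since $(N-1)+q-(p-1)=0$, the powers of $\Delta$ cancel exactly and $(\sigma_\Delta/\sqrt{\Delta})c_r\to \sigma K\,b(\lambda_r)/a'(\lambda_r)$, where $K:=\prod_{i=1}^{N-1}(1+\eta(\xi_i))/\sqrt{(2N-1)!\prod_{i=1}^{N-1}\eta(\xi_i)}$. Summing over $r$ and invoking \eqref{CARMAkernel} gives $g^\Delta(t)\to\sigma K\,g(t)$, so everything comes down to showing $K=1$.

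The main obstacle, and the heart of the proof, is this identity $K=1$, equivalently $\big(\prod_{i=1}^{N-1}(1+\eta(\xi_i))\big)^2=(2N-1)!\prod_{i=1}^{N-1}\eta(\xi_i)$. From the defining relation \eqref{eta1}, each $\eta=\eta(\xi_i)$ satisfies $\eta^2-2(\xi_i-1)\eta+1=0$, whence $(1+\eta)^2=1+\eta^2+2\eta=2(\xi_i-1)\eta+2\eta=2\eta\,\xi_i$. Taking the product over $i$ reduces the claim to the single relation $\prod_{i=1}^{N-1}\xi_i=(2N-1)!/2^{N-1}$. This product equals $(-1)^{N-1}$ times the ratio of the constant term to the leading coefficient of $\alpha(x)$, which I would extract directly from the generating function $\sinh z/(\cosh z-1+x)$: writing $u=\cosh z-1$ and expanding $(x+u)^{-1}=\sum_{n\ge0}(-1)^n u^n x^{-(n+1)}$, the coefficient of $z^{2N-1}$ is a polynomial in $x^{-1}$ whose $x^{-N}$-multiple is $\alpha(x)$. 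The leading coefficient of $\alpha$ (the $x^{-1}$ coefficient, arising from the $z^{2N-1}$ term of $\sinh z$ times the constant $x^{-1}$) is $1/(2N-1)!$, and its constant term (the $x^{-N}$ coefficient, arising from the $z$ term of $\sinh z$ times the lowest-order part $(-1)^{N-1}(z^2/2)^{N-1}x^{-N}$ of $u^{N-1}x^{-N}$) is $(-1)^{N-1}/2^{N-1}$. Hence $\prod_{i=1}^{N-1}\xi_i=(-1)^{N-1}\cdot\frac{(-1)^{N-1}/2^{N-1}}{1/(2N-1)!}=(2N-1)!/2^{N-1}$, which gives $K=1$ and completes the proof. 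The term-by-term asymptotics above are routine Taylor expansions; it is this exact normalization, resting on the product of the $\xi_i$, that carries the real content.
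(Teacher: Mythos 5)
Your proposal is correct and follows essentially the same route as the paper: part (i) by partial-fraction expansion of $\theta^\Delta(z)/\phi^\Delta(z)$ using Theorem \ref{MArepresentation1}, and part (ii) by Taylor-expanding each factor and cancelling the powers of $\Delta$, with everything resting on the product identity for the $\xi_i$. You in fact supply the detail the paper leaves implicit — deriving $(1+\eta)^2=2\eta\xi$ from the quadratic for $\eta$ and extracting $\prod_i\xi_i=(2N-1)!/2^{N-1}$ from the generating function (the paper's stated identity $\prod_i(1+\eta(\xi_i))^2/\eta(\xi_i)=\prod_i\xi_i/2=[2(p-q)-1]$ contains typographical slips; your version $\prod_i 2\xi_i=[2(p-q)-1]!$ is the correct one).
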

\begin{proof} (i) The expression for $\sigma^2_\Delta$ was found already as part of Theorem~\ref{MArepresentation1}.  The coefficient $\psi_j^\Delta$ is the coefficient of $z^j$ in the power series expansion,
$$\sum_{j=0}^\infty\psi_j^\Delta z^j= {{ \prod_{i=1}^{p-1-q}(1+(\eta(\xi_i)+o(1))z)\prod_{k=1}^q(1-(\zeta_k+o(\Delta))z)}\over{\prod_{m=1}^p(1-e^{\lambda_m\Delta} z)}},             $$which can be seen, by partial fraction expansion, to be equal to (\ref{psijdelta}).

\vskip .1in(ii) The convergence of $g^\Delta$ follows by substituting $\psi_j^\Delta$ and $\sigma^2_\Delta$ from (\ref{psijdelta}) and (\ref{sigmasquared}) into (\ref{gdeltadef1}), substituting for $\zeta_k$ from  (\ref{zeta1}), letting $\Delta\rightarrow 0$ and using the identities
$$a'(\lambda_r)=\prod_{m\ne r}(\lambda_r-\lambda_m)$$and
$$\prod_{i=1}^{p-q-1}{{(1+\eta(\xi_i))^2}\over{\eta(\xi_i)}}=\prod_{i=1}^{p-q-1}{{\xi_i}\over{2}}=[2(p-q)-1].$$
%the last equality following from (\ref{product}).

\vskip -.2in\end{proof}

\brem\rm
Although we have established the convergence of $g^\Delta$ only for CARMA processes, we conjecture that it holds for all processes defined as in (\ref{CMA}).  In practice we have found that estimation of $g$ by non-parametric estimation of $g^\Delta$ with $\Delta$ small works well not only for CARMA processes but also  for simulated processes with non-rational spectral densities.
\erem

\section{Estimation of $g^\Delta$}\label{sec:approx}

Given observations of $Y^\Delta$ with $\Delta$ small, we estimate the kernel $g$ by estimating the approximation $g^\Delta$ defined in (\ref{gdeltadef1}) which, as shown in the preceding section, converges pointwise to $\sigma g$ as $\Delta\rightarrow 0$ for all $\CARMA(p,q)$ processes.
If the driving  L\'evy process is standardized so that Var($L_1$)=1, then $g^\Delta$ converges pointwise to $g$.   From now on we make this assumption since without it $g$ is identifiable only to within multiplication by a constant.

To estimate $g^\Delta$ it suffices to estimate the coefficients and white noise variance in the
Wold representation (\ref{Wold1}) of $Y^\Delta$, for which standard non-parametric  methods are available.
Being non-parametric they require no {\it a priori} knowledge of the order
of the underlying CARMA process and moreover they can be applied to the sampled observations of any CMA of the form \eqref{CMA}.
The most direct estimator of the Wold parameters of a causal invertible ARMA process is based on the innovations algorithm (see Brockwell and Davis (1991), Section 8.3).
Noting that the definition (\ref{gdeltadef1}) is equivalent to
\begin{equation}\label{kernel:estimation:formula}g^\Delta(t)={{\sigma_\Delta}\over{\sqrt{\Delta}}}\psi^\Delta_{\lfloor t/\Delta\rfloor},\end{equation}
where $\lfloor t/\Delta\rfloor$ denotes the integer part of $t/\Delta$,
we obtain the following asymptotic result for the estimation of $g^\Delta(t)$ for fixed $\Delta$ as $n\to\infty$ in the important cases when $Y$ is either a Gaussian CARMA process
of arbitrary order or a CARMA(1,0) process with arbitrary second-order driving L\'evy process.
It follows directly from Theorem~2.1 of~\cite{bd:1988}.

\begin{thm}\label{consistency}
Suppose that $Y$ is a  Gaussian  $\CARMA(p,q)$ process or a general L\'evy-driven $\CARMA(1,0)$ process observed at times
$k\Delta,~k=1,\ldots,n$.  For any fixed $t\ge0$ and $\Delta>0$,  let $r=\lfloor t/\Delta\rfloor$.
Then the innovations estimators ${\hat\theta}_{m,r}$ and ${\hat v}_m$ of  $\psi^\Delta_{\lfloor t/\Delta\rfloor}$ and $\sigma_\Delta^2$, respectively, have the following asymptotic properties.
For any sequence of positive integers $\{m(n),  n=1,2,\ldots\}$ such that $m<n$, $m\rightarrow\infty$ and $m=o(n^{1/3})$ as $n\rightarrow\infty$,
${\hat\theta}_{m(n),r}$ is consistent for $\psi^\Delta_{\lfloor t/\Delta\rfloor}$ and asymptotically normal.  More specifically,
\begin{equation}\label{innovations:algo:as}
{\sqrt{n}}({\hat\theta}_{m(n),r}-\psi^\Delta_{\lfloor t/\Delta\rfloor}) \, \Rightarrow \, {\rm N}(0,a_\Delta),\end{equation}
where $a_\Delta:=\dsum_{j=0}^{r-1}(\psi_j^\Delta)^2$, and
\begin{equation}\label{innovations:algo:cons}{\hat v}_{m(n)} \rightarrow\hskip -.16in ^P \hskip .1in\sigma_\Delta^2.\end{equation}
\end{thm}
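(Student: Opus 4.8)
The plan is to verify, in each of the two cases, that the sampled sequence $Y^\Delta$ satisfies the hypotheses of Theorem~2.1 of \cite{bd:1988}, and then simply to quote that theorem, reading off the limiting variance $a_\Delta$ from its conclusion. The estimators $\hat\theta_{m,r}$ and $\hat v_m$ produced by the innovations algorithm are exactly the statistics whose joint asymptotics \cite{bd:1988} analyses, and the growth condition $m=o(n^{1/3})$ together with $m\to\infty$ is precisely the one imposed there, so no new limit theorem need be proved. What must be checked is that $Y^\Delta$ is a causal, invertible linear process driven by an \emph{i.i.d.} sequence whose coefficients satisfy the summability assumption of \cite{bd:1988}.

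First I would record the linear structure. By \eqref{ARMA}--\eqref{PSexpansion}, $Y^\Delta$ obeys a causal ARMA equation with autoregressive polynomial \eqref{ARpol}, whose roots $e^{-\lambda_j\Delta}$ lie outside the unit disc because $\Re(\lambda_j)<0$, while the minimum-phase hypothesis on $Y$ places the moving-average roots outside the unit disc as well. Hence $Y^\Delta$ is causal and invertible, its Wold coefficients $\psi^\Delta_j$ in \eqref{psijdelta} decay geometrically, and $\sum_{j} j\,|\psi^\Delta_j|<\infty$, so the coefficient-summability requirement is automatic.

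The delicate point, and the reason the statement is restricted to the two listed cases, is that \cite{bd:1988} needs the Wold innovations $\{Z^\Delta_n\}$ to be genuinely i.i.d., whereas \eqref{Wold1} in general supplies only an uncorrelated white-noise sequence. I would dispose of this separately. When $Y$ is Gaussian, $Y^\Delta$ is a stationary Gaussian sequence, so its Wold innovations are jointly Gaussian and uncorrelated, hence independent and identically distributed with moments of all orders finite. When $Y$ is a general L\'evy-driven $\CARMA(1,0)$ process, Example~\ref{Carma10ex} displays the innovations explicitly as $Z^\Delta_n=\int_{(n-1)\Delta}^{n\Delta}e^{\lambda(n\Delta-u)}\,dL_u$, which are i.i.d. because $L$ has stationary, independent increments over the disjoint intervals $((n-1)\Delta,n\Delta]$. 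In both cases $\{Z^\Delta_n\}$ is i.i.d. and meets the moment requirement of \cite{bd:1988}; note in particular that the limiting variance in \eqref{innovations:algo:as} does not involve the fourth moment of the noise, which is what makes the finite-variance assumption on $L$ enough in the $\CARMA(1,0)$ case.

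With the hypotheses in hand, Theorem~2.1 of \cite{bd:1988} gives, for the fixed coefficient index $r=\lfloor t/\Delta\rfloor$, both $\hat v_{m(n)}\stp\sigma^2_\Delta$ and $\sqrt{n}\,(\hat\theta_{m(n),r}-\psi^\Delta_r)\Rightarrow N\!\bigl(0,\sum_{j=0}^{r-1}(\psi^\Delta_j)^2\bigr)$, which are precisely \eqref{innovations:algo:cons} and \eqref{innovations:algo:as} with $a_\Delta=\sum_{j=0}^{r-1}(\psi^\Delta_j)^2$. I expect the i.i.d.\ verification to be the only real obstacle: for a non-Gaussian $\CARMA(p,q)$ process with $p\ge2$ the Wold innovations are merely uncorrelated rather than independent, so \cite{bd:1988} does not apply and the argument genuinely breaks down --- which is exactly why the Gaussian case and the $\CARMA(1,0)$ case are singled out.
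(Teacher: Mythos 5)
Your proposal is correct and follows exactly the route the paper takes: the paper offers no separate proof, stating only that the result "follows directly from Theorem~2.1 of \cite{bd:1988}," with the subsequent remark explaining that the restriction to Gaussian or $\CARMA(1,0)$ processes is precisely the i.i.d.\ requirement on $\{Z^\Delta_n\}$ that you verify (Gaussianity plus uncorrelatedness in the first case, the explicit integrals over disjoint increments of $L$ in the second). Your write-up is in fact more explicit than the paper's about checking causality, invertibility and coefficient summability, but it is the same argument.
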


\brem
(i) \, The restriction to either Gaussian or CARMA(1,0) processes stems from the fact that in these cases the driving noise sequence $\{Z^\Delta_n\}_{n\in\bbz}$ is i.i.d. as required by Theorem~2.1 of \cite{bd:1988}.  By Lemma~2.1 of \cite{BrLi} the driving noise sequence is in general uncorrelated but not i.i.d.  For general L\'evy-driven CARMA processes, \cite{NoiseExtraction} show that the sequence $Z^\Delta$, with appropriate normalisation,  is a consistent estimator, as $\Delta\rightarrow 0$, of the increments of the driving L\'evy process, suggesting that the sequence $\{Z^\Delta_n\}_{n\in\bbz}$ is
approximately i.i.d. for small $\Delta$ even in the general case.

(ii) \, In the following corollaries and in Theorem~\ref{clt} we retain the assumptions on $Y$ and the sequence $\{m(n)\}$ made in the statement of Theorem \ref{consistency}.
%This indicates that, if the sampling frequency is high enough, Theorem~\ref{consistency} still applies to the more general case. \\
%(ii) \, Theorem~\ref{consistency} holds for every L\'evy driven CAR(1) models, since in that case the $Z^\Delta$ are i.i.d. for every L\'evy process; cf. Example~\ref{excar1}.\\
%(iii) \, Nonetheless, \cite{NoiseExtraction} showed that  $Z^\Delta$ is, with appropriate normalisation,  a consistent estimator of the increments of any L\'evy-driven CARMA process as %$\Delta\rightarrow 0$.
%This indicates that, if the sampling frequency is high enough, Theorem~\ref{consistency} still applies to the more general case.
\erem

\begin{cor}\label{4.2}
In the notation of Theorem~\ref{consistency}, the estimator,
\begin{equation}\label{estimator}
{\wh g}^\Delta(t):={{{\sqrt{{\hat v}_{m(n)}}}}\over{\sqrt{\Delta}}}{\hat\theta}_{m(n),r},\end{equation}
%{{{\sqrt{v_{m(n)}}\over{\sqrt{\Delta}}}{\hat\theta}_{m,r}\end{equation}
of $g(t)$ has error,
\begin{equation}\label{error:estimator}{\wh g}^\Delta(t)-g(t)=g^\Delta(t)-g(t)+\epsilon_n(t),\end{equation}
where $\epsilon_n(t)$ is asymptotically normal as $n\rightarrow\infty$, with asymptotic mean and variance, 0 and
$a_\Delta\sigma_\Delta^2/(n\Delta)$, respectively.
\end{cor}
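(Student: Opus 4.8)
The plan is to treat $\wh g^\Delta(t)$ as a product of the two innovations estimators and to reduce the statement to the central limit theorem of Theorem~\ref{consistency} by a Slutsky/linearization argument. Throughout write $\psi:=\psi^\Delta_{\lfloor t/\Delta\rfloor}$, $\hat\psi:=\hat\theta_{m(n),r}$ and $\hat\sigma:=\sqrt{\hat v_{m(n)}}$, so that $\wh g^\Delta(t)=\Delta^{-1/2}\hat\sigma\hat\psi$ while, by (\ref{kernel:estimation:formula}), $g^\Delta(t)=\Delta^{-1/2}\sigma_\Delta\psi$. The decomposition (\ref{error:estimator}) is then immediate: adding and subtracting $g^\Delta(t)$ gives $\wh g^\Delta(t)-g(t)=(g^\Delta(t)-g(t))+\epsilon_n(t)$ with $\epsilon_n(t):=\wh g^\Delta(t)-g^\Delta(t)=\Delta^{-1/2}(\hat\sigma\hat\psi-\sigma_\Delta\psi)$, and since $g^\Delta(t)-g(t)$ is deterministic, only $\epsilon_n(t)$ carries the sampling randomness. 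The heart of the matter is therefore the asymptotic law of $\hat\sigma\hat\psi-\sigma_\Delta\psi$.

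Next I would linearize the product through the exact bilinear identity
$$\hat\sigma\hat\psi-\sigma_\Delta\psi=\sigma_\Delta(\hat\psi-\psi)+\psi(\hat\sigma-\sigma_\Delta)+(\hat\sigma-\sigma_\Delta)(\hat\psi-\psi).$$
For the first term, the central limit theorem (\ref{innovations:algo:as}) gives $\sqrt{n}(\hat\psi-\psi)\Rightarrow{\rm N}(0,a_\Delta)$, whence $\sqrt{n}\,\Delta^{-1/2}\sigma_\Delta(\hat\psi-\psi)\Rightarrow{\rm N}(0,a_\Delta\sigma_\Delta^2/\Delta)$; after restoring the $n^{-1/2}$ scaling this produces precisely the asymptotic mean $0$ (the centering of the limit law, legitimate because $m=o(n^{1/3})$ kills the finite-$m$ bias of $\hat\psi$) and variance $a_\Delta\sigma_\Delta^2/(n\Delta)$ claimed for $\epsilon_n(t)$. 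It therefore remains to show that the other two terms are of smaller order, i.e.\ $o_P(n^{-1/2})$. The cross term is harmless: it is the product of the $\stp 0$ factor $\hat\sigma-\sigma_\Delta$ (continuous mapping applied to the consistency statement (\ref{innovations:algo:cons})) and the $O_P(n^{-1/2})$ factor $\hat\psi-\psi$, hence $o_P(n^{-1/2})$.

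The main obstacle is the middle term $\psi(\hat\sigma-\sigma_\Delta)$: to discard it one needs $\sqrt{n}(\hat\sigma-\sigma_\Delta)\stp 0$, that is, a rate of convergence of $\hat v_{m(n)}$ to $\sigma_\Delta^2$ faster than $n^{-1/2}$, whereas Theorem~\ref{consistency} supplies only the bare consistency (\ref{innovations:algo:cons}). This is the delicate point, and I would address it by invoking the finer behaviour of the innovations white-noise-variance estimate from Theorem~2.1 of \cite{bd:1988}, combined with the geometric decay of $v_m-\sigma_\Delta^2$ for the ARMA process $Y^\Delta$ (which controls the deterministic, truncation part of $\hat v_{m(n)}-\sigma_\Delta^2$); equivalently, one may establish joint asymptotic normality of $(\hat\psi,\hat v_{m(n)})$ and verify that the variance-estimation coordinate enters the relevant linear combination only at lower order.

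Once $\sqrt{n}(\hat\sigma-\sigma_\Delta)\stp 0$ is in hand, Slutsky's theorem yields $\epsilon_n(t)=\Delta^{-1/2}\sigma_\Delta(\hat\psi-\psi)+o_P(n^{-1/2})$, so that $\epsilon_n(t)$ inherits the limit of its leading term and is asymptotically normal with mean $0$ and variance $a_\Delta\sigma_\Delta^2/(n\Delta)$, as asserted. The deterministic discrepancy $g^\Delta(t)-g(t)$ in (\ref{error:estimator}) is then exactly the approximation error analysed in Theorem~\ref{Woldapprox}, which tends to $0$ as $\Delta\rightarrow 0$, completing the interpretation of the error decomposition.
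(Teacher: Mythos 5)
Your argument follows the same route as the paper's: the decomposition \eqref{error:estimator} is definitional, and the distributional claim is reduced to the central limit theorem \eqref{innovations:algo:as} via Slutsky's theorem together with the consistency \eqref{innovations:algo:cons} of $\hat v_{m(n)}$. The paper's proof is a single sentence asserting exactly this reduction and displaying $\sqrt{n\Delta}\,\epsilon_n(t)/\sigma_\Delta \Rightarrow {\rm N}(0,a_\Delta)$. Where you go beyond the paper is in writing out the bilinear expansion $\hat\sigma\hat\psi-\sigma_\Delta\psi=\sigma_\Delta(\hat\psi-\psi)+\psi(\hat\sigma-\sigma_\Delta)+(\hat\sigma-\sigma_\Delta)(\hat\psi-\psi)$ and observing that the middle term is not disposed of by consistency alone: one needs $\sqrt{n}(\hat\sigma-\sigma_\Delta)\stp 0$. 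This is a genuine point, and the paper's one-line proof passes over it silently --- \eqref{innovations:algo:cons} as stated carries no rate, so a literal application of Slutsky controls only the terms containing the factor $\hat\psi-\psi$. Your proposed repair (the finer behaviour of $\hat v_m$ from Theorem~2.1 of \cite{bd:1988}, or joint asymptotic normality of $(\hat\theta_{m,r},\hat v_m)$, combined with the geometric decay in $m$ of the deterministic truncation error $v_m-\sigma_\Delta^2$ for the ARMA process $Y^\Delta$) is the right place to look, but you do not carry it out, so strictly speaking the term $\psi(\hat\sigma-\sigma_\Delta)$ remains uncontrolled in your write-up exactly as it does in the paper's. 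Everything else --- the treatment of the cross term, the identification of the limiting variance $a_\Delta\sigma_\Delta^2/(n\Delta)$ via \eqref{kernel:estimation:formula}, and the role of $g^\Delta(t)-g(t)$ as the deterministic bias handled by Theorem~\ref{Woldapprox} --- is correct and matches the paper.
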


\begin{proof}
Using \eqref{kernel:estimation:formula}, \eqref{innovations:algo:cons} and \eqref{estimator} we deduce from \eqref{innovations:algo:as} that, as $n\rightarrow\infty$,
\begin{equation}\label{4.5} {{\sqrt{n\Delta}}\over{\sigma_\Delta}}\epsilon_n(t)={{\sqrt{n\Delta}}\over{\sigma_\Delta}}({\wh g}^\Delta(t)-g^\Delta(t)) \, \Rightarrow \, {\rm N}(0,a_\Delta),\end{equation}
which is equivalent to the statement of the corollary.
\end{proof}

\bexam[The CARMA(1,0) process]\label{excar1}
Application of Corollary 4.2 to the $\CARMA(1,0)$ process using the results of Example \ref{Carma10ex} (with $\sigma^2=1$) immediately yields the representation
\begin{equation}\label{car1}
{\wh g}^\Delta(t)-g(t)=\epsilon_n(t)+\left({\sqrt{{{e^{2\lambda\Delta}-1}\over{2\lambda\Delta}}}}-1\right)e^{\lambda t},\end{equation}
where, as $n\rightarrow\infty$, $\epsilon_n(t)$ is asymptotically normal with mean $0$ and variance $(e^{2\lambda t}-1)/(2\lambda n\Delta)$.
The last term in \eqref{car1} tends to zero
as $\Delta\rightarrow 0$ and $\epsilon_n(t)$ converges in probability to $0$ if we allow $\Delta$ to depend on $n$ in such a way that $\Delta(n)\to0$ and $n\Delta(n)\rightarrow \infty$ as $n\rightarrow \infty$.
\eexam

In the following we shall suppose, as in Example \ref{excar1}, that $\Delta$ depends on $n$ in such a way that $\Delta(n)\rightarrow 0$ and $n\Delta(n)\rightarrow\infty$ as $n\rightarrow\infty$ and study the asymptotic behaviour of ${\wh g}^\Delta(t)$ as $n\rightarrow\infty$.

\begin{cor}\label{cor:const}
If $\Delta(n)\rightarrow 0$ and $n\rightarrow\infty$ in such a way that $n\Delta(n)\rightarrow\infty$ (i.e. such that the time interval over which the observations are made goes to $\infty$) then ${\wh g}^\Delta(t)$ is consistent for $g(t)$ for each fixed $t$.
\end{cor}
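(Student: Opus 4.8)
The plan is to start from the exact error representation already supplied by Corollary \ref{4.2},
$$\wh g^\Delta(t)-g(t)=\big(g^\Delta(t)-g(t)\big)+\epsilon_n(t),$$
and to kill the deterministic bias $g^\Delta(t)-g(t)$ and the stochastic error $\epsilon_n(t)$ separately. The bias is the easy half: since we now let $\Delta=\Delta(n)\to 0$, Theorem \ref{Woldapprox}(ii) (under the standardization $\sigma=1$ in force since Section 4) gives $g^\Delta(t)\to g(t)$ pointwise, so $g^\Delta(t)-g(t)\to 0$ as a deterministic sequence. Everything therefore comes down to showing $\epsilon_n(t)\stp 0$ along the diagonal $\Delta=\Delta(n)$.

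For the stochastic term I would exploit that, by Corollary \ref{4.2}, $\epsilon_n(t)$ is (for fixed $\Delta$) asymptotically centered normal with variance $a_\Delta\sigma_\Delta^2/(n\Delta)$, so the task reduces to showing this variance vanishes. The key observation is that the scale factor in the definition of $g^\Delta$ makes this variance a Riemann sum for a finite integral. Using the identity (\ref{kernel:estimation:formula}), namely $g^\Delta(j\Delta)=(\sigma_\Delta/\sqrt{\Delta})\psi_j^\Delta$, one rewrites
$$a_\Delta\sigma_\Delta^2=\sigma_\Delta^2\sum_{j=0}^{r-1}(\psi_j^\Delta)^2=\Delta\sum_{j=0}^{r-1}\big(g^\Delta(j\Delta)\big)^2=\int_0^{r\Delta}\big(g^\Delta(s)\big)^2\,ds,$$
the last step because $g^\Delta$ is constant on each interval $[j\Delta,(j+1)\Delta)$. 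Since $r\Delta=\lfloor t/\Delta\rfloor\,\Delta\to t$ and $g^\Delta\to g$ with the $g^\Delta$ uniformly bounded on $[0,t]$ (so bounded convergence applies), this integral converges to $\int_0^t g(s)^2\,ds$, which is finite because $g$ is square integrable. Hence $a_\Delta\sigma_\Delta^2=O(1)$ and the asymptotic variance of $\epsilon_n(t)$ is $O\!\big(1/(n\Delta)\big)\to 0$ by the hypothesis $n\Delta\to\infty$.

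The hard part will be the passage to the diagonal. Corollary \ref{4.2} gives asymptotic normality of $\epsilon_n(t)$ only for each fixed $\Delta$ as $n\to\infty$, whereas here $\Delta=\Delta(n)$ shrinks simultaneously, so one cannot simply read off ``variance $\to 0$ implies consistency''. The cleanest route I would take is to work at the level of $L^2$: establish a genuine (non-asymptotic) bound $E[\epsilon_n(t)^2]\le a_\Delta\sigma_\Delta^2/(n\Delta)\,(1+o(1))$ whose remainder is controlled uniformly enough in $\Delta$ to survive the simultaneous limit, and then apply Chebyshev's inequality to conclude $\epsilon_n(t)\stp 0$. Extracting that uniformity from the proof of Theorem 2.1 of \cite{bd:1988}—this is precisely where the conditions $m\to\infty$ and $m=o(n^{1/3})$ must be used—is the main technical obstacle. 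As both a sanity check and a template for the general argument, the CARMA(1,0) case is already fully explicit in Example \ref{excar1}: there $\var(\epsilon_n(t))=(e^{2\lambda t}-1)/(2\lambda n\Delta)\to 0$ and the bias $\big(\sqrt{(e^{2\lambda\Delta}-1)/(2\lambda\Delta)}-1\big)e^{\lambda t}\to 0$ can be checked by hand, so both limits are confirmed with no appeal to uniformity whatsoever.
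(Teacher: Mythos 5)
Your proof takes essentially the same route as the paper's: the identical bias/variance decomposition from Corollary \ref{4.2}, the bias term killed by Theorem \ref{Woldapprox}(ii), and the stochastic term killed by showing that $a_\Delta\sigma_\Delta^2$ stays bounded so that the asymptotic variance $a_\Delta\sigma_\Delta^2/(n\Delta)$ vanishes under $n\Delta\to\infty$; the only difference is that the paper disposes of the boundedness in one line via $a_\Delta\sigma_\Delta^2\le {\rm Var}(Y(t))$ (a prediction mean-squared error cannot exceed the variance), whereas you use the Riemann-sum identity $a_\Delta\sigma_\Delta^2=\int_0^{r\Delta}(g^\Delta(s))^2\,ds$, which the paper itself only invokes later, in the proof of Theorem \ref{clt}. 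The diagonal-limit uniformity you flag as the main technical obstacle is passed over silently in the paper's one-sentence proof, so your write-up is, if anything, more candid on that point.
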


\begin{proof}
Under the conditions stated, the random variables $\epsilon_n(t)$ in \eqref{error:estimator} converge in probability to zero by Corollary~\ref{4.2} and the fact that $a_\Delta\sigma_\Delta^2\le {\rm Var}(Y(t))$.
The deterministic component of  \eqref{error:estimator}, $g^\Delta(t)-g(t)$, converges to zero by Theorem \ref{Woldapprox}.
\end{proof}

If we impose an additional condition on the rate at which $\Delta(n)$ converges to zero we obtain the following central limit theorem for our estimator ${\wh g}^\Delta(t)$. Its proof is given in the Appendix.

\begin{thm}\label{clt}
Suppose that $Y$ is a Gaussian $\CARMA(p,q)$ process or a general L\'evy-driven CARMA(1,0) process observed at times $k\Delta(n)$, $k=1,\ldots n$. If  $\Delta(n)\rightarrow 0$, $n\Delta(n)\rightarrow\infty$ and $n(\Delta(n))^3\rightarrow 0$ as $n \rightarrow \infty$, then, for fixed $t\ge 0$,
$$\sqrt{n\Delta}(\wh g^\Delta(t)-g(t)) \, \Rightarrow \, {\rm N}(0,\int_0^t g^2(u) du) ~{\rm as}~n\rightarrow\infty.$$
\end{thm}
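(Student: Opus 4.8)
The plan is to start from the exact error decomposition of Corollary~\ref{4.2},
$$\wh g^\Delta(t)-g(t)=\bigl(g^\Delta(t)-g(t)\bigr)+\epsilon_n(t),$$
multiply through by $\sqrt{n\Delta}$, and treat the deterministic bias $\sqrt{n\Delta}\,(g^\Delta(t)-g(t))$ and the stochastic term $\sqrt{n\Delta}\,\epsilon_n(t)$ separately. The aim is to show that the bias is negligible under the extra rate condition $n\Delta^3\to0$, while the stochastic term carries all the mass and converges to the stated Gaussian limit.

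For the bias I would sharpen Theorem~\ref{Woldapprox}: instead of the bare pointwise statement $g^\Delta(t)\to g(t)$, I would track the correction terms to show $g^\Delta(t)-g(t)=O(\Delta)$ for fixed $t$. This is transparent in the CARMA(1,0) case of Example~\ref{excar1}, where the bias equals $\bigl(\sqrt{(e^{2\lambda\Delta}-1)/(2\lambda\Delta)}-1\bigr)e^{\lambda t}=\tfrac{\lambda\Delta}{2}e^{\lambda t}+O(\Delta^2)$; in general it arises from the $O(\Delta)$ scale factor $\sigma_\Delta/\sqrt\Delta$ together with the discretization estimate $\lfloor t/\Delta\rfloor\Delta=t+O(\Delta)$ in \eqref{kernel:estimation:formula} and the smoothness of the exponential kernel \eqref{CARMAkernel}. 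Granting $g^\Delta(t)-g(t)=O(\Delta)$, one obtains $\sqrt{n\Delta}\,(g^\Delta(t)-g(t))=O(\sqrt{n\Delta^3})\to0$, which is exactly where the hypothesis $n(\Delta(n))^3\to0$ is used.

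For the stochastic term the first step is to identify the limiting variance. Using $g^\Delta(j\Delta)=(\sigma_\Delta/\sqrt\Delta)\psi_j^\Delta$ and $a_\Delta=\sum_{j=0}^{r-1}(\psi_j^\Delta)^2$ with $r=\lfloor t/\Delta\rfloor$, the asymptotic variance of $\sqrt{n\Delta}\,\epsilon_n(t)$ from Corollary~\ref{4.2} is $a_\Delta\sigma_\Delta^2$, and since $g^\Delta$ is the step function in \eqref{gdeltadef1},
$$a_\Delta\sigma_\Delta^2=\Delta\sum_{j=0}^{r-1}\bigl(g^\Delta(j\Delta)\bigr)^2=\int_0^{r\Delta}\bigl(g^\Delta(u)\bigr)^2\,du\longrightarrow\int_0^t g^2(u)\,du,$$
using $r\Delta\to t$, the pointwise convergence $g^\Delta\to g$ of Theorem~\ref{Woldapprox}, and a dominated-convergence bound on $\int(g^\Delta)^2$. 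This already produces the advertised asymptotic variance. Asymptotic normality of the centred, normalized innovations estimate is supplied, for each fixed $\Delta$, by \eqref{innovations:algo:as} and \eqref{4.5}; the remaining work is to upgrade this to the regime in which $\Delta=\Delta(n)\to0$ jointly with $n\to\infty$, by re-examining the triangular-array CLT underlying Theorem~2.1 of \cite{bd:1988} and checking that its Lindeberg and rate conditions hold uniformly along the sequence, after which a Slutsky argument combines it with the variance limit above.

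The hard part will be precisely this last point. Theorem~2.1 of \cite{bd:1988} is stated for a single fixed invertible ARMA model, whereas here the ARMA approximation \eqref{ARMA} of $Y^\Delta$ changes with $n$ through $\Delta(n)$ and, worse, degenerates as $\Delta\to0$: the autoregressive roots $e^{\lambda_j\Delta}\to1$ and the moving-average roots approach the unit circle (compare the parameters $\eta(\xi_i),\zeta_k$ of Theorem~\ref{MArepresentation1}), so the model drifts toward the non-invertible, near-unit-root boundary. I would need to verify that the rate $m=o(n^{1/3})$ still controls the bias of $\hat\theta_{m(n),r}$ and that the Lindeberg condition for the underlying linear-process partial sums survives this degeneration, so that the fixed-$\Delta$ convergence \eqref{4.5} can legitimately be passed through the limit $\Delta(n)\to0$. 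Establishing this uniformity is where the real content lies; the bias and variance computations are comparatively routine once the expansions of $\psi_j^\Delta$ and $\sigma_\Delta^2$ from Theorem~\ref{Woldapprox} are in hand.
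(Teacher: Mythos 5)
Your decomposition, your treatment of the bias, and your identification of the limiting variance all coincide with the paper's proof, so this is essentially the same argument; the two places where you genuinely diverge are worth recording. For the variance, you use the exact identity $a_\Delta\sigma_\Delta^2=\int_0^{r\Delta}(g^\Delta(u))^2\,du$ and pass to the limit by pointwise convergence of $g^\Delta$ plus domination (which you would still need to supply, e.g.\ from the uniform boundedness of the coefficients $C(r,\Delta)$ on compacts); the paper instead observes that $a_\Delta\sigma_\Delta^2$ and $\int_0^{\Delta\lfloor t/\Delta\rfloor}g^2(u)\,du$ are the mean squared errors of the best linear predictors of $Y_{\Delta\lfloor t/\Delta\rfloor}$ based on the discrete past $\{Y_{k\Delta},k\le 0\}$ and the continuous past $\{Y_s,s\le 0\}$ respectively, and lets mean-square continuity of $Y$ force their difference to zero --- a slicker route that avoids the domination argument entirely. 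For the bias, the paper does exactly what you propose, expanding the coefficient $C(r,\Delta)$ in powers of $\Delta$ with leading term $b(\lambda_r)/a'(\lambda_r)$ and bounding $\sqrt{n}\,\Delta^{k+1/2}\le\sqrt{n}\,\Delta^{3/2}$ for $k\ge1$. As for the point you single out as ``the hard part'' --- upgrading the fixed-$\Delta$ central limit theorem of \cite{bd:1988} to the triangular-array regime $\Delta=\Delta(n)\to0$ --- the paper does not address it either: having shown $a_\Delta\sigma_\Delta^2\to\int_0^tg^2(u)\,du$, it simply asserts that the joint limit ``follows.'' So your caution there is not a defect relative to the published argument; if anything you have located the one step that the paper leaves implicit.
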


\brem{We shall refer to the estimator (\ref{estimator}) as the {\it innovations estimator} of $g(t)$.   Instead of using the innovations estimates of $\psi^\Delta_{\lfloor t/\Delta\rfloor}$
and $\sigma^\Delta$ as in   (\ref{estimator}), we could also use  the coefficients and white-noise standard deviation obtained by using the Durbin-Levinson algorithm to fit
a high-order causal $\AR$ process with white-noise variance $\tau^2$ to the observed values of $Y^\Delta_k, k=1,\ldots n$,  and numerically inverting the fitted autoregressive polynomial $\phi(z)=1-\phi_1z-\ldots-\phi_pz^p$ to obtain the moving average representation
$$Y_n^\Delta=\sum_{j=0}^\infty \beta_j Z_{n-j}, ~~\{Z_{n}\}_{n\in\bbz}\sim{\rm WN}(0,\tau^2).$$
 where $\beta(z):=\sum_{j=0}^\infty\beta_jz^j=1/\phi(z), ~|z|\le 1$.  Substituting the estimators $\tau^2$ for $v_m$ and $\beta_r$ for $\theta_{m(n),r}$ gives the {\it Durbin-Levinson estimator} (of order $p$) for $g^\Delta(t)$.  Both of these estimators will be used in the examples which follow.   In practice it has been found that the Durbin-Levinson algorithm gives better results except when the fitted autoregressive polynomial has zeroes  very close to the unit circle.}
\erem

\bexam[Simulation results] {We now illustrate the performance of the estimators by applying them to realizations of the Gaussian CMA process $Y$ defined by (\ref{CMA}) with gamma kernel function,
\begin{equation}g(t)=t^{\nu-1}e^{-\lambda t}{\bf 1}_{(0,\infty)}(t), ~~\lambda>0,~ \nu>1/2,\end{equation}
with standard Brownian motion as the driving L\'evy process.  The variance of $Y_t$ is
$$\gamma_Y(0)=(2\lambda)^{1-2\nu}\Gamma(2\nu-1)$$
and the autocorrelation function is
$$\rho_Y(h)={{2^{3/2-\nu}}\over{\Gamma(\nu-1/2)}}|\lambda h|^{\nu-1/2}K_{\nu-1/2}(|\lambda h|),~~h\in\mathbb{R},$$
where the function $K_{\nu-1/2}$ is the modified  Bessel function of the second kind with index $\nu-1/2$ (\cite{abramowitz}, Section 9.6).
This is known as the Whittle-Mat\'{e}rn autocorrelation function (see \cite{whittle}) with parameter $\nu-1/2$, evaluated at $\lambda h$.

The simulations were carried out with $\lambda=1$ and two values of $\nu$, namely $\nu=1.05$ and $\nu=2$.  The kernel with $\nu=2$ is actually the kernel of the
$\CARMA(2,0)$ process \eqref{1.1} with $a(z)=(z+\lambda)^2$, $b(z)=1$ and $\sigma^2=1$.  The gamma kernel with $\nu=1.05$ however is the kernel of a CMA process but not of any CARMA process.

We first estimated ${\wh g}^\Delta$ by applying both the Durbin-Levinson and innovations algorithms to the {\it true} autocovariance functions which are known
for the simulated processes.  The purpose was to assess the effect of the sampling error when the {\it sample} autocovariances of the data are used.
The estimated kernel functions are shown in the upper rows of Figures 1-4.

The continuous-time sample-paths of $Y$ were simulated at the very finely-spaced times $k\Delta$ with $\Delta=10^{-6}$.
The sequences $Y^\Delta$ used to estimate $g$ were then sampled from these values using two different spacings, $\Delta=0.25$ and $\Delta=0.0625$
We then estimated the kernel function $g(\cdot)$ up to time $T=8$, and plotted $\wh{g}^\Delta((j+\frac{1}{2})\Delta)$ for $j=0,\ldots,N=32$ and for $j=0,\ldots,N=128$, respectively.  In the case of the innovations algorithm, we used (for the true as well as for the estimated autocovariances) values of the discrete autocovariance functions up to $3N$,
i.e. we chose $m$ in (\ref{estimator}) to be $3N$. {We could equally well have plotted ${\wh g}^\Delta((j+h)\Delta)$ for any $h\in[0,1)$, where the bias depends
on $h$.  However the variation becomes negligible as $\Delta\rightarrow 0$.  Some partial results regarding the optimal choice of $h$ are given in \cite{NoiseExtraction}.
}
%We present in Figure 2 the theoretical autocorrelation functions for $\nu=1.05$ and  $\nu=2$ and the empirical ones for the two time resolutions $\Delta=2^{-2}=0.25$ and $\Delta=2^{-4}=0.0625$.

\begin{figure}[ht]
\begin{center}
\includegraphics[width=1.0\textwidth]{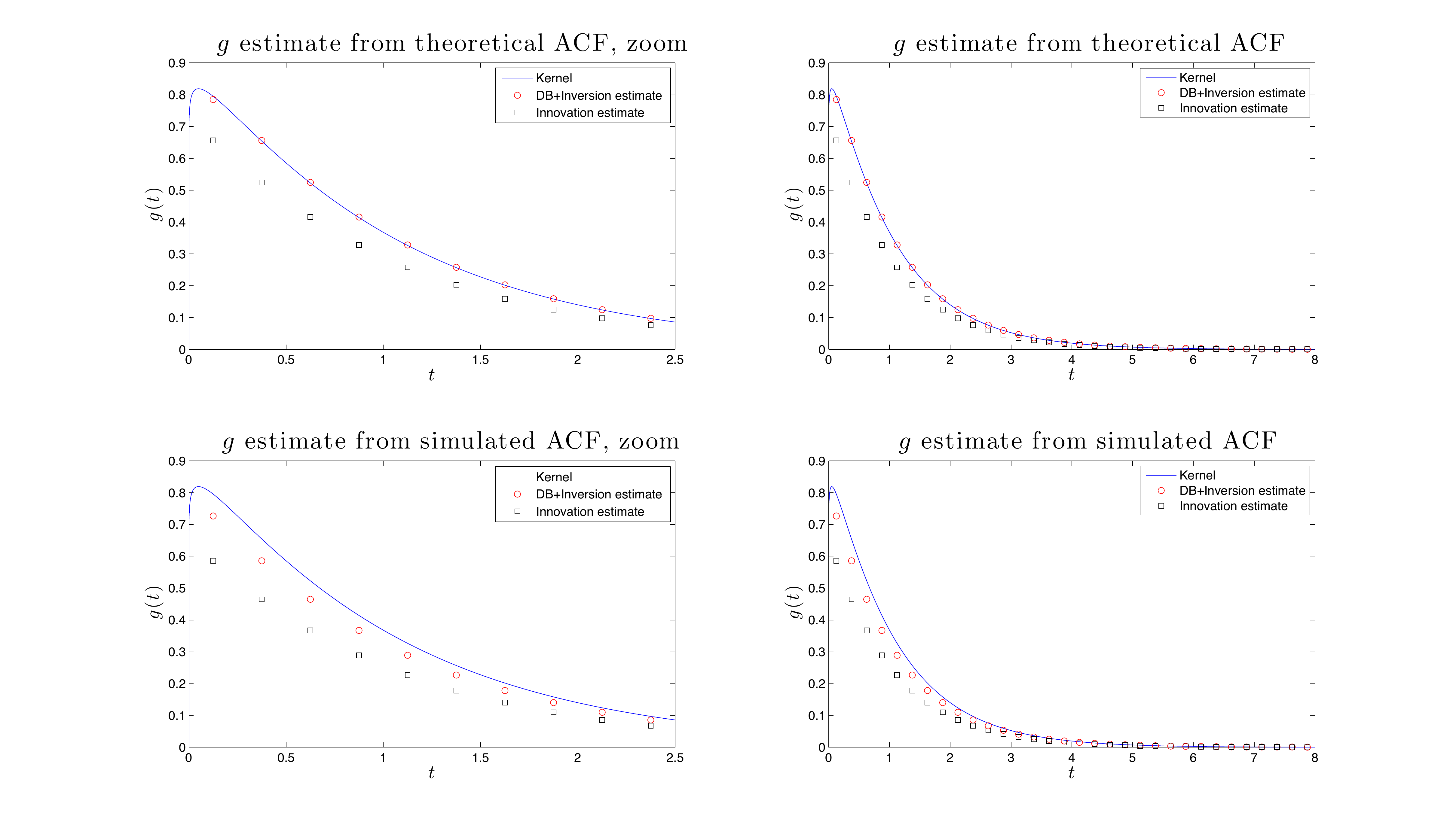}
\caption{Estimation of the gamma kernel for $\nu=1.05$ and $\Delta=2^{-2}$.}
\end{center}
\end{figure}

The results are shown in the bottom rows of Figures 1-4, where the squares denote the estimates from the innovations algorithm, and the circles denote those from the Durbin-Levinson algorithm. For reference the true kernel function is  plotted with a solid line.
Comparing the top and bottom rows of Figures 1-4 we find for the estimated autocovariance function an intrinsic finite-sample error, which influences the kernel estimation.
We notice that in all cases considered, the Durbin-Levinson algorithm gives better estimates. Furthermore, as expected,  the estimates for both algorithms improve with decreasing grid spacing.
The Durbin-Levinson algorithm provides estimates which are in good agreement with the original kernel function even for the coarse grid with $\Delta=0.25$.
}
\eexam

\begin{figure}[h!]
\begin{center}
\includegraphics[width=1.0\textwidth]{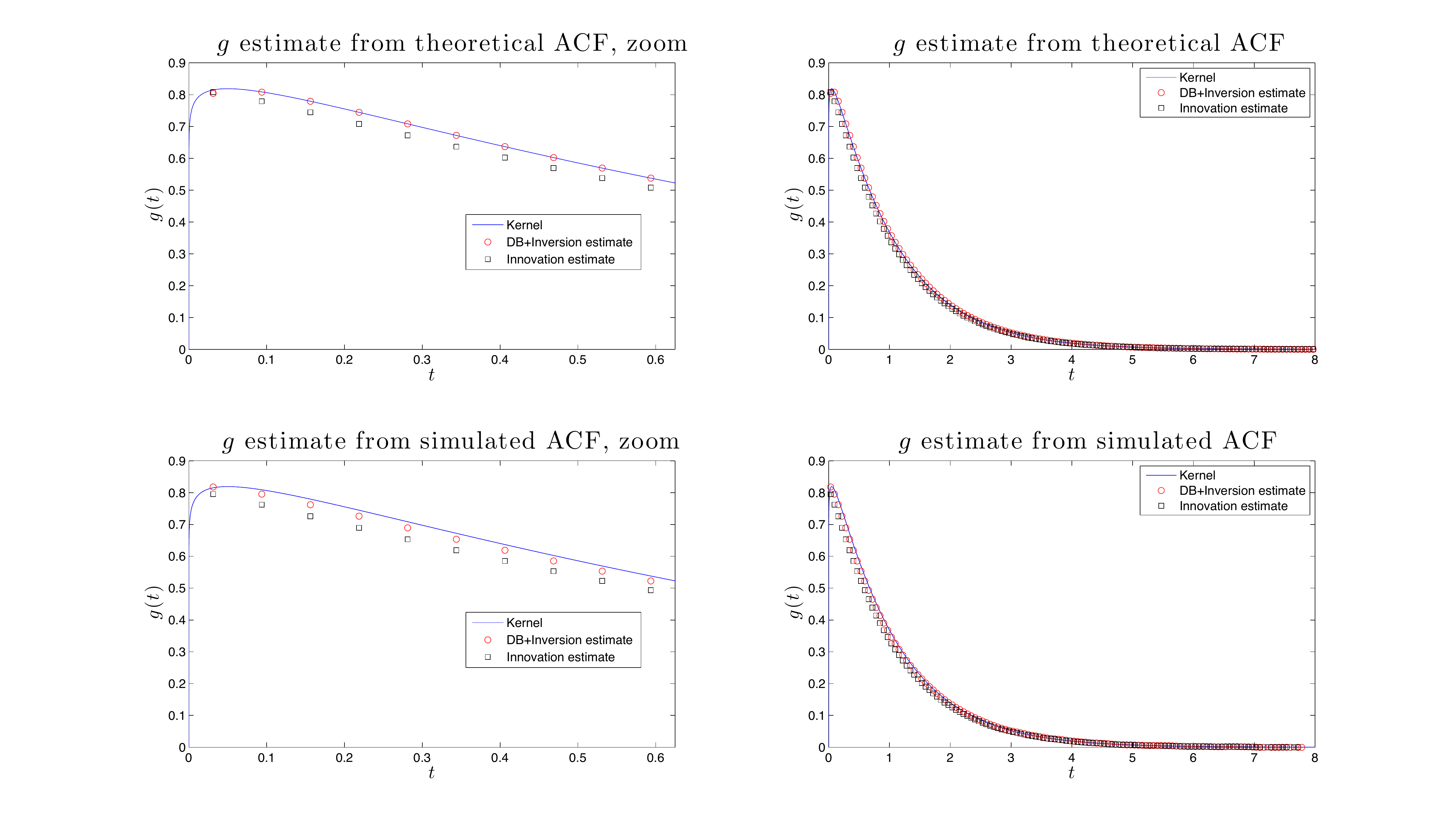}
\caption{Estimation of the gamma kernel   for $\nu=1.05$ and $\Delta=2^{-4}$.}
\end{center}
\end{figure}

\begin{figure}[h!]
\begin{center}
\includegraphics[width=1.0\textwidth]{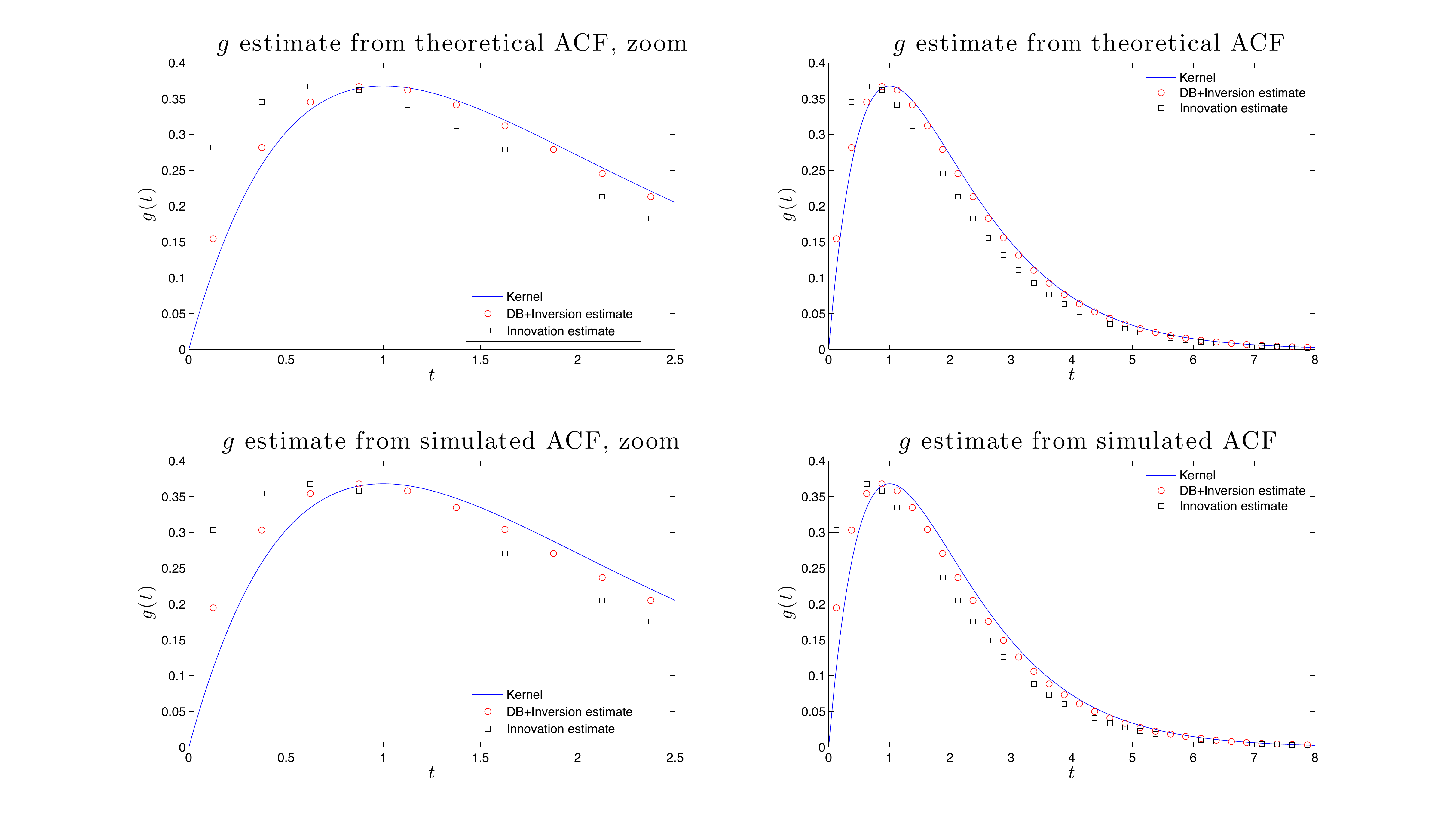}
\caption{Estimation of the gamma kernel   for $\nu=2$ (CAR(2) process) and $\Delta=2^{-2}$.}
\end{center}
\end{figure}

\begin{figure}[t!]
\begin{center}
\includegraphics[width=1.0\textwidth]{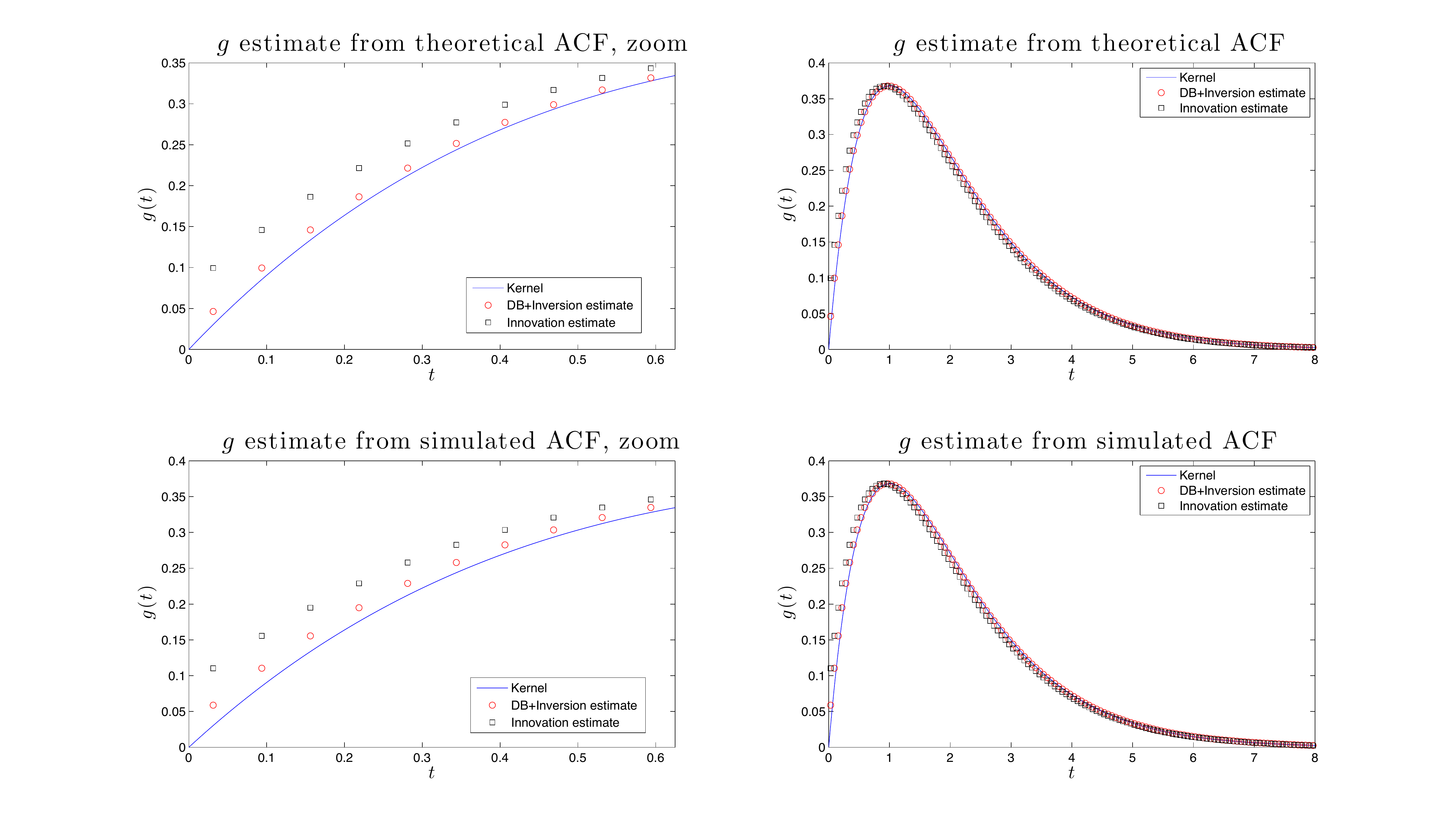}
\caption{Estimation of the gamma kernel   for $\nu=2$ (CAR(2) process) and $\Delta=2^{-4}$.}
\end{center}
\end{figure}

\section{An application to real data: mean flow turbulent velocities}\label{Brookhaven}
%Turbulent data show distinguished characteristics, which depend mainly on a parameter called Reynolds number.
%In general, the higher the Reynolds number is, the more pronounced these characteristics are.
%Almost every fluid in everyday experience is highly turbulent; therefore flows displaying a  turbulent behavior  are the most interesting from the scientific and the applicative standpoint. Many of those features depends from the spectral density, i.e. the kernel function in the model \eqref{CMA}, with is supposed to be \emph{universal}, that is, identical up to some scaling, for every turbulence flow, so there an intrinsic interest in estimating it.
%
%\C{I don't like this paragraph, it is on one hand too sophisticated, and on the other too simplistic.
%Perhaps, reading this one article again by Ole and Juergen, may help to get better formulations.
%Or we are just happy with the next paragraph and drop the above.}
We now apply the Durbin-Levinson  algorithm of Section~\ref{sec:approx} to the Brookhaven turbulent wind-speed data, which consists of $20\times10^6$ measurements taken at 5000Hz (i.e. 5000 data points per second).
The series thus covers a total time interval of approximately 67 minutes and the sampling interval $\Delta$ is $2\times 10^{-4}$ seconds.
This dataset displays a rather high Reynolds number (about 17000), typical of turbulent phenomena.
A more detailed presentation of turbulence phenomena and an application of the  CMA model \eqref{CMA} in the context of turbulence modelling is given in \citet{fk:2011:1}; moreover we refer to \cite{Dhruva, techEnzo} for a precise description of the data, and to \cite{pope,frisch} for a comprehensive review of turbulence theory.
A CMA model \eqref{CMA}  with a gamma kernel as in Example~\ref{ga:ker} has been suggested as a parametric model in \cite{intermit2}.

Figure \ref{results:brook} a) shows the sample autocorrelation function up to 120 seconds, which appears to be exponentially decreasing. In general, the data are not significantly correlated after a lag of 100 seconds.

The estimated spectral density ${\wh f}_Y$ of $Y^\Delta$ is shown in Figure \ref{results:brook} b), plotted against the frequency $\varphi$, measured in cycles per second (Hz). The estimates marked by circles were estimated by Welch's method (\cite{welchmethod}) with segments of $2^{22}$ data points (circa 14 minutes), windowed with a Hamming window and  using an overlapping factor of 50\%. This method allows a significant reduction of the variance of the estimate, sacrificing some resolution in frequency. In order to have a better resolution near  frequency zero, we estimated the spectral density for $\varphi\leq 10^{-3}$ Hz with the raw periodogram (\cite{BD}, p. 322), which provides a better resolution in frequency at cost of a larger variance. The results are plotted in the leftmost part of Figure \ref{results:brook} b) with diamonds, and the two ranges of estimation are indicated by a vertical solid line. The spectral density is plotted on a log-log scale, so that any power-law relationship will be reflected by linearity of the graph.  The spectral density in the neighborhood of zero appears to be essentially constant, as is compatible with an exponentially decreasing autocorrelation function (such as the gamma kernel function of Example~\ref{ga:ker}).

For frequencies $\varphi$ between $10^{-2}$ and 200Hz,  $\log{\wh f}_Y$ decreases linearly with $\log \varphi$ with a slope of approximately $-5/3$, in accordance with Kolmogorov's $5/3$-law.   For comparison, the solid line corresponds to a spectral density proportional to $\varphi^{-5/3}$.  For $\varphi$ larger than 200Hz, the spectral density deviates from the $5/3$-law, decaying with a steeper slope. We note that a spectral density decaying as prescribed by  Kolmogorov's law in the neighborhood of $\infty$ would require a kernel behaving like $t^{-1/6}$ near to the origin, according to Proposition \ref{abel} (see below).

The estimated kernel function $\wh g^\Delta(t)$ is plotted in Figure~\ref{results:brook} c) on a log-linear scale in order to highlight the behaviour of the kernel estimate at both very large and very small values of $t$.
The estimated $g(t)$ decays rapidly with $t$, with small oscillations around zero for $t>100$ seconds.
As $t$ decreases from this value to roughly $10^{-3}$ seconds,  the estimated kernel increases in accordance with Kolmogorov's $5/3$-law, dropping off to zero as $t$ decreases further,  matching  the steeper decay of the spectral density at high frequencies evident in Figures 5 b) and 5 d).

Figure 6 d) shows the spectral density computed directly from the estimated kernel function ${\wh g}^\Delta$.  Its close
resemblance to the spectral density calculated by Welch's method provides justification for our estimator of $g$
even when there is no underlying parametric model.

\begin{figure}[t!]
\begin{center}
\includegraphics[width=1\textwidth]{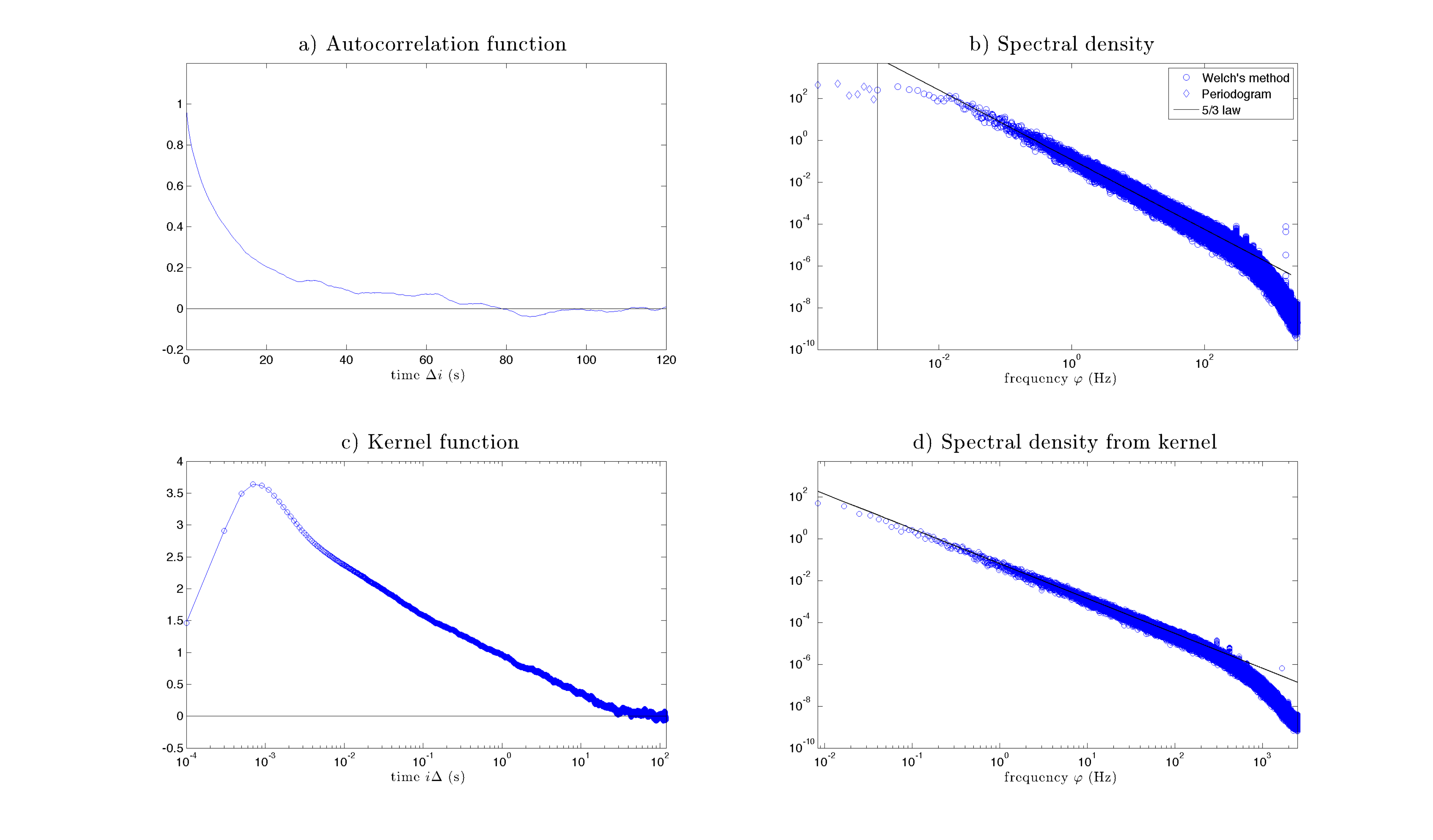}
\caption{Estimates for the Brookhaven dataset: a) autocorrelation function b) spectral density (Welch estimator and periodogram) c) kernel function (linear-log scale) d) spectral density computed using the estimated kernel.}
\label{results:brook}
\end{center}
\end{figure}

\section{Asymptotics for a class of sampled CMA processes as $\Delta\rightarrow 0$}\label{sec:asymp}

 {\cite{bfk:2011:1} derived first-order asymptotic expressions, as $\Delta\rightarrow 0$, for the spectral density $f_\Delta(\omega)$ of $Y^\Delta$, where $\omega$ denotes frequency in radians per unit time and $Y$ is  a $\CARMA(p,q)$ process
with $p-q\le 3$.
Although, as pointed out in Section 2, these asymptotic expressions are not sufficiently precise to establish the convergence of $g^\Delta$ to $g$,
they do reveal the {\it local second-order behaviour} of the process $Y$.  For example, if $Y$ is a CARMA$(p,p-1)$ process driven by a L\'evy process $L$ with Var$(L_1)=\sigma^2$,  then equations (15) and (19) of  \cite{bfk:2011:1} give, as $\Delta\rightarrow 0$,
$$f_\Delta(\omega)\sim{{\sigma^2\Delta}\over{4\pi(1-\cos\omega)}},\quad -\pi\le\omega\le\pi,$$
showing that the spectral density of the {\it normalized differenced} sequence $\{(Y_{n\Delta}-Y_{(n-1)\Delta})/{\sqrt{\Delta}}\}_{n\in\bbz}$ converges to that of white noise with variance $\sigma^2$  as $\Delta\rightarrow 0$.  In other words, for any fixed positive integer $k$, the sequence of observations $Y_{n\Delta}/\sqrt{\Delta}, ~n=1,\ldots,k$, from a second-order point of view, behaves as $\Delta\rightarrow 0$ like a sequence of observations of integrated white noise with white-noise variance $\sigma^2$.

In this section we derive analogous asymptotic approximations for the spectral densities of more general CMA processes and the implications for their local second-order behaviour.  Since we allow in this section for spectral densities with a singularity at zero we introduce the modified spectral domains,
$$\Omega_d:=[-\pi,\pi]\backslash\{0\} ~{\rm and}~\Omega_c:=(-\infty,\infty)\backslash\{0\}.$$We require the CMA processes to have spectral density satisfying a weak regularity condition at infinity.  To formulate this condition we first need a definition.}

\begin{defn}[Regularly varying function (cf. \cite{BGT}]
Let $f$ be a positive, measurable function defined on $(0,\infty)$.
If there exists $\rho\in\bbr$ such that
$$\lim_{x\rightarrow\infty}\frac{f(\lambda x)}{f(x)}=\lambda^{\rho}, \quad{\rm for \ all}\  \lambda>0,$$ holds,  $f$ is called a {\em regularly varying function of index $\rho$ at $\infty$}.
The convergence is then automatically locally uniform in $\la$.
We shall denote this class of functions by $\mathcal{R}_{\rho}(\infty)$. Furthermore we shall say that $f(\cdot)\in\mathcal{R}_\rho(0+)$ if and only if $f(1/\cdot)\in\mathcal{R}_{-\rho}(\infty)$.
\end{defn}

The characterization theorem for regularly varying functions (Theorem~1.4.1. in \cite{BGT}) tells us that $f\in\mathcal{R}_\rho(\infty)$ if and only if $f(x)=x^\rho \ell(x)$, where $\ell\in\mathcal{R}_0(\infty)$.

\begin{thm}\label{scaling:order}
Let $Y$ be the $\CMA$ process  \eqref{CMA} with strictly positive spectral density $f_Y$ such that
$f_Y\in\mathcal{R}_{-\alpha}(\infty)$, where $\alpha>1$, i.e., for $\ell\in\mathcal{R}_0(\infty)$,
\begin{equation}\label{reg:density}
f_Y(\omega)=|\omega|^{-\alpha}\ell(|\omega|),\quad \omega\in\Omega_c.
\end{equation}
Then
the following assertions hold.\\[2mm]
(a) \,
The spectral density of the sampled process $Y^\Delta$ has for $\Delta\rightarrow 0$ the asymptotic representation
\begin{equation}\label{slowv:sample}
f_\Delta(\omega)\sim \ell(\Delta^{-1})\Delta^{\alpha-1}
\left[|\omega|^{-\alpha}+(2\pi)^{-\alpha}\zeta\left(\alpha,1-\frac{\omega}{2\pi}\right)+(2\pi)^{-\alpha}\zeta\left(\alpha,1+\frac{\omega}{2\pi}\right)\right],\quad \omega\in\Omega_d,
\end{equation}
 where $\zeta(s,r)$ is the Hurwitz zeta function, defined as
$$\zeta(s,r):=\sum_{k=0}^\infty\frac{1}{(r+k)^s},\quad \Re(s)>1,\quad  r\neq 0, -1, -2,\ldots.$$
(b)\,
The right hand side of \eqref{slowv:sample} is not integrable for any $\Delta>0$.
However, the corresponding asymptotic spectral density of the differenced sequence $(1-B)^{\alpha/2 }Y^\Delta$ is integrable for each fixed $\Delta>0$ and  the spectral density of
\begin{equation}\label{rescaled}
\frac{(1-B)^{\alpha/2}}{\ell(\Delta^{-1})^{1/2}\Delta^{(\alpha-1)/{2}}}{Y^\Delta}\end{equation}
converges as $\Delta\rightarrow 0$ to that of a short-memory
stationary process{, i.e. a stationary process with spectral density bounded in a neighbourhood of the origin}.\\[2mm]
(c)\, The variance of the innovations $\{Z^\Delta_n\}_{n\in\bbz}$ in the Wold representation (\ref{Wold1}) of $Y^\Delta$ satisfies
$$\sigma^2_\Delta\sim 2\pi C_\alpha \ell\left(\Delta^{-1}\right) \Delta^{\alpha-1}, \quad \Delta\rightarrow 0,$$
where
\begin{equation}\label{coefficients}
C_\alpha=\exp\left\{\frac{1}{2\pi}\int_{-\pi}^{\pi}\log\left[|\omega|^{-\alpha}+(2\pi)^{-\alpha}\zeta\left(\alpha,1-\frac{\omega}{2\pi}\right)+(2\pi)^{-\alpha}\zeta\left(\alpha,1+\frac{\omega}{2\pi}\right)\right]d\omega\right\}.
\end{equation}
\end{thm}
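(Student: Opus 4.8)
The plan is to base everything on the aliasing (folding) identity for the spectral density of a sampled process: with the convention $\gamma_Y(h)=\int_{-\infty}^\infty e^{ih\omega}f_Y(\omega)\,d\omega$, the sampled sequence $Y^\Delta$ has spectral density
$$f_\Delta(\omega)=\frac1\Delta\sum_{k=-\infty}^\infty f_Y\!\left(\frac{\omega+2\pi k}{\Delta}\right),\qquad\omega\in[-\pi,\pi],$$
which follows by partitioning $\int_{-\infty}^\infty e^{in\Delta\omega}f_Y(\omega)\,d\omega$ into frequency bands of width $2\pi/\Delta$ and substituting $\omega\mapsto(\omega+2\pi k)/\Delta$. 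Inserting the regularly varying form \eqref{reg:density} and pulling out $\Delta^\alpha$ from each term gives
$$\frac{f_\Delta(\omega)}{\ell(\Delta^{-1})\Delta^{\alpha-1}}=\sum_{k=-\infty}^\infty|\omega+2\pi k|^{-\alpha}\,\frac{\ell(|\omega+2\pi k|\,\Delta^{-1})}{\ell(\Delta^{-1})}.$$
For $\omega\in\Omega_d$ every argument $\omega+2\pi k$ is nonzero, so splitting off $k=0$ and regrouping $k>0$ and $k<0$ identifies the $\Delta$-free sum as the bracketed Hurwitz-zeta factor in \eqref{slowv:sample}, which I denote $h(\omega)$, via $\sum_{k\ge1}(k\pm\omega/(2\pi))^{-\alpha}=\zeta(\alpha,1\pm\omega/(2\pi))$.

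For (a) I would let $\Delta\to0$ term by term. Slow variation gives $\ell(|\omega+2\pi k|\Delta^{-1})/\ell(\Delta^{-1})\to1$ for each fixed $k$; the only real work is to interchange limit and infinite sum. Here I would invoke Potter's bounds (\cite{BGT}, Theorem~1.5.6): fixing $\delta\in(0,\alpha-1)$, for all $\Delta$ small enough the ratio is bounded by $A\max\{|\omega+2\pi k|^{\delta},|\omega+2\pi k|^{-\delta}\}$, so each summand is dominated by a constant multiple of $|\omega+2\pi k|^{-\alpha+\delta}$ (plus the single finite $k=0$ contribution), which is summable precisely because $\alpha-\delta>1$. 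Dominated convergence then yields (a), and this is exactly where the hypothesis $\alpha>1$ enters.

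For (b), the factor $h(\omega)$ behaves like $|\omega|^{-\alpha}$ near the origin, since the two Hurwitz-zeta terms have arguments in $[\tfrac12,\tfrac32]$ and are therefore continuous and bounded on $[-\pi,\pi]$; as $\alpha>1$ the singularity $|\omega|^{-\alpha}$ is non-integrable at $0$, proving the first claim. The fractional-difference filter $(1-B)^{\alpha/2}$ has squared gain $|1-e^{-i\omega}|^{\alpha}=(2\sin(\omega/2))^{\alpha}\sim|\omega|^{\alpha}$ near $0$, so the spectral density of the rescaled process \eqref{rescaled} equals $|1-e^{-i\omega}|^{\alpha}f_\Delta(\omega)/(\ell(\Delta^{-1})\Delta^{\alpha-1})$ and, by (a), converges pointwise on $\Omega_d$ to $|1-e^{-i\omega}|^{\alpha}h(\omega)$. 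This limit extends continuously to $\omega=0$ with value $1$ (the zeta terms are killed by the vanishing gain while $|1-e^{-i\omega}|^{\alpha}|\omega|^{-\alpha}\to1$), hence is bounded near the origin, i.e. the spectral density of a short-memory process; both it and the pre-limit filtered densities are integrable because the bounded gain tames the $|\omega|^{-\alpha}$ factor.

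Finally, for (c) I would apply the Kolmogorov--Szeg\H{o} formula $\sigma^2_\Delta=2\pi\exp\{\frac1{2\pi}\int_{-\pi}^\pi\log f_\Delta(\omega)\,d\omega\}$ (\cite{BD}). Writing $f_\Delta=\ell(\Delta^{-1})\Delta^{\alpha-1}r_\Delta$ with $r_\Delta\to h$ by (a), the prefactor comes out of the exponential as $2\pi\,\ell(\Delta^{-1})\Delta^{\alpha-1}$, leaving $\exp\{\frac1{2\pi}\int_{-\pi}^\pi\log r_\Delta\,d\omega\}$, whose limit is exactly the $C_\alpha$ of \eqref{coefficients}. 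The hard part will be passing the limit through the logarithm and the integral: $\log h$ is integrable since near $0$ it grows only like $-\alpha\log|\omega|$, and I would control $\log r_\Delta$ uniformly in small $\Delta$ by using Potter's bounds to sandwich $r_\Delta$ between constant multiples of $|\omega|^{-\alpha+\delta}$ and $|\omega|^{-\alpha-\delta}$ near the origin (and between positive constants away from it), so that $|\log r_\Delta|$ is dominated by an integrable multiple of $1+|\log|\omega||$. Dominated convergence then gives $\int_{-\pi}^\pi\log r_\Delta\,d\omega\to\int_{-\pi}^\pi\log h\,d\omega=2\pi\log C_\alpha$, whence $\sigma^2_\Delta\sim2\pi C_\alpha\ell(\Delta^{-1})\Delta^{\alpha-1}$. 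The recurring obstacle throughout is thus the $\Delta$-uniform control of the slowly varying ratios, for which Potter's bounds are the essential tool.
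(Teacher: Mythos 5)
Your proposal is correct and follows essentially the same route as the paper: the aliasing formula, factoring out $\ell(\Delta^{-1})\Delta^{\alpha-1}$, Potter bounds plus dominated convergence to pass the limit through the sum (and, in (c), through $\int\log f_\Delta$), the Hurwitz-zeta regrouping, and the Kolmogorov--Szeg\H{o} formula. The only cosmetic difference is in (c), where the paper first observes that the fractionally differenced process has the same one-step prediction error as $Y^\Delta$ (because $\int_{-\pi}^{\pi}\log(2-2\cos\omega)\,d\omega=0$) before applying Kolmogorov's formula, whereas you apply it directly to $f_\Delta$; your more explicit justification of the integrable dominating function for $\log r_\Delta$ via two-sided Potter bounds is a welcome elaboration of a step the paper leaves terse.
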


\brem{\rm (i) Theorem~\ref{scaling:order}(b) means that, from a second-order point of view, a sample $\{Y^\Delta_n, n=1,\ldots,k\}$ with $k$ fixed and
$\Delta$ small resembles a sample from an $({\alpha/2})$-times
integrated short-memory stationary sequence. If in (b) we replace
$(1-B)^{\alpha/2}$ by $(1-B)^\gamma$ where $\gamma>(\alpha-1)/2$,
then the conclusion holds for the overdifferenced process.
 If, for example, we difference at order
$\gamma=\lfloor(\alpha+1)/2\rfloor$ (the
smallest integer greater than $(\alpha-1)/2$) we get a stationary process.
{In particular, if $1<\alpha<3$, then $\lfloor(\alpha+1)/2\rfloor=1$ and, by (\ref{slowv:sample}) and (\ref{filt:process}), the differenced sequence $(1-B)Y^\Delta$ has the asymptotic spectral density, as $\Delta\rightarrow 0$,
$$\ell(\Delta^{-1})\Delta^{\alpha-1} 2(1-\cos\omega) \left[|\omega|^{-\alpha}+(2\pi)^{-\alpha}\zeta\left(\alpha,1-\frac{\omega}{2\pi}\right)+(2\pi)^{-\alpha}\zeta\left(\alpha,1+\frac{\omega}{2\pi}\right)\right], \, \omega\in\Omega_d.$$This is the spectral density of the increment process of a self-similar process with self-similarity parameter $H=(\alpha-1)/2$ (see \cite{StatMethodLong}, eq. (2)).  In general, for $\alpha>1$ the asymptotic autocorrelation function of the filtered sequence has unbounded support. The only notable exception is when $\alpha$ is even, where the asymptotic autocorrelation sequence is the one of a moving-average process with order $\alpha/2$, as in \cite{bfk:2011:1} or in Example \ref{oss:carma}.} \\[2mm]
(ii) \, The constant $C_\alpha$ of (\ref{coefficients}) is shown as a function of $\alpha$ in Figure~6.   The values, when $\alpha$ is an even positive integer, can be derived
from (\ref{sigmasquared}) since CARMA processes constitute a subclass of the processes covered by the theorem (see Example \ref{oss:carma}).  It is clear from (\ref{coefficients}) that $C_\alpha$  is exponentially bounded as $\alpha\rightarrow\infty$.}
\erem
\begin{figure}[h!]
\begin{center}
\includegraphics[width=0.8\textwidth, height=0.4\textwidth]{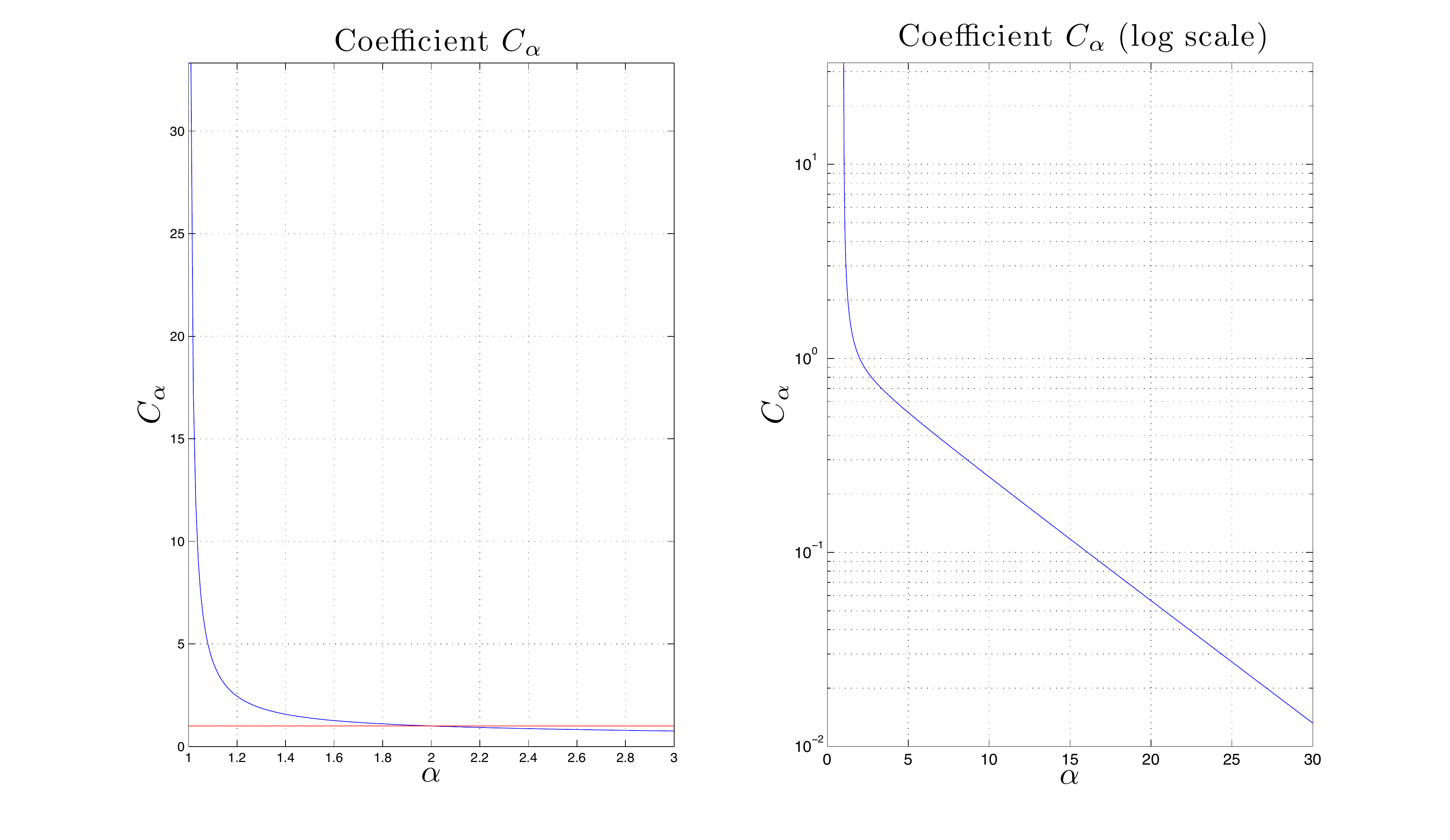}
\caption{The constant $C_\alpha$, as a function of the index of regular variation $\alpha$, is shown on the left using a linear scale and  on the right using a logarithmic scale. From Corollary 3.4 (a) of \cite{bfk:2011:1} we know that $C_2=1$. The horizontal line indicates the value 1.}
\end{center}
\end{figure}
{
\begin{cor}\label{corol:incr}
Let $Y$ be a CMA process satisfying the assumptions of Theorem~\ref{scaling:order} with $1<\alpha<2p+1$. Then as $\Delta\rightarrow 0$,
$$\bbe[((1-B)^pY^\Delta_{n})^2]\sim 2^p S_{p,\alpha}\ell(\Delta^{-1})\Delta^{\alpha-1},$$
where
$$S_{p,\alpha}=\int_{-\pi}^{\pi}(1-\cos\omega)^p\left[|\omega|^{-\alpha}+(2\pi)^{-\alpha}\zeta\left(\alpha,1-\frac{\omega}{2\pi}\right)+
(2\pi)^{-\alpha}\zeta\left(\alpha,1+\frac{\omega}{2\pi}\right)\right]d\omega.$$
\end{cor}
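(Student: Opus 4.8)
The plan is to read $\bbe[((1-B)^pY^\Delta_n)^2]$ off the spectral density of $Y^\Delta$, whose small-$\Delta$ behaviour is already supplied by Theorem~\ref{scaling:order}(a). Since applying $(1-B)^p$ multiplies the spectral density by the squared transfer function $|1-e^{-i\omega}|^{2p}=2^p(1-\cos\omega)^p$, the zero-mean sequence $(1-B)^pY^\Delta$ has variance
\begin{equation*}
\bbe[((1-B)^pY^\Delta_n)^2]=2^p\int_{-\pi}^{\pi}(1-\cos\omega)^p f_\Delta(\omega)\,d\omega.
\end{equation*}
Dividing by $2^p\ell(\Delta^{-1})\Delta^{\alpha-1}$ and inserting the asymptotic form (\ref{slowv:sample}) for $f_\Delta$, the candidate limit of the integral is exactly $S_{p,\alpha}$. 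Moreover the bracket in (\ref{slowv:sample}) is the periodization $\sum_{k\in\bbz}|\omega+2\pi k|^{-\alpha}$, so, using the $2\pi$-periodicity of $(1-\cos\omega)^p$ and unfolding the sum, one sees that $S_{p,\alpha}=\int_{-\infty}^{\infty}(1-\cos v)^p|v|^{-\alpha}\,dv$; this full-line form is the one I would actually estimate.

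To make the limit rigorous I would avoid passing $\Delta\to0$ under the $[-\pi,\pi]$ integral directly and instead use the aliasing identity $f_\Delta(\omega)=\Delta^{-1}\sum_{k\in\bbz}f_Y((\omega+2\pi k)/\Delta)$. Combined with the periodicity of the weight, this collapses the sum of integrals into a single integral over the line,
\begin{equation*}
\bbe[((1-B)^pY^\Delta_n)^2]=2^p\int_{-\infty}^{\infty}(1-\cos(u\Delta))^p f_Y(u)\,du=\frac{2^p}{\Delta}\int_{-\infty}^{\infty}(1-\cos v)^p f_Y(v/\Delta)\,dv,
\end{equation*}
after the change of variables $v=u\Delta$. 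Writing $f_Y(v/\Delta)=|v/\Delta|^{-\alpha}\ell(|v|/\Delta)$ from (\ref{reg:density}) and dividing by $2^p\ell(\Delta^{-1})\Delta^{\alpha-1}$ reduces the claim to showing that $R_\Delta:=\int_{-\infty}^{\infty}(1-\cos v)^p|v|^{-\alpha}\,\ell(|v|/\Delta)/\ell(1/\Delta)\,dv\to S_{p,\alpha}$, the integrand converging pointwise to $(1-\cos v)^p|v|^{-\alpha}$ by the slow variation of $\ell$.

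Before taking the limit I would record that $S_{p,\alpha}<\infty$, and this is precisely where the hypothesis $1<\alpha<2p+1$ enters. Near $v=0$ the integrand behaves like $2^{-p}|v|^{2p-\alpha}$ because $(1-\cos v)^p\sim(v^2/2)^p$, which is integrable iff $2p-\alpha>-1$, i.e. $\alpha<2p+1$; at infinity $(1-\cos v)^p$ is bounded and $|v|^{-\alpha}$ is integrable iff $\alpha>1$, the same condition that makes the Hurwitz-zeta series converge. Thus the stated range is exactly the range of finiteness of $S_{p,\alpha}$, in agreement with the non-integrability noted in Theorem~\ref{scaling:order}(b), where the taming factor $(1-\cos\omega)^p$ is absent.

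The main obstacle is the uniform domination needed to justify $R_\Delta\to S_{p,\alpha}$ by dominated convergence. On the range $|v|\ge X\Delta$, where both $|v|/\Delta$ and $1/\Delta$ are large, I would invoke Potter's bounds (\cite{BGT}, Theorem~1.5.6): for any $\delta>0$ there is a constant $C$ with $\ell(|v|/\Delta)/\ell(1/\Delta)\le C\max(|v|^\delta,|v|^{-\delta})$, and choosing $\delta$ small enough to keep $\alpha-\delta>1$ and $2p-\alpha-\delta>-1$ (possible since both inequalities are strict) yields the integrable dominating function $C(1-\cos v)^p|v|^{-\alpha}\max(|v|^\delta,|v|^{-\delta})$. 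The delicate point, which I expect to be the crux, is the shrinking neighbourhood $|v|<X\Delta$ on which Potter's estimate fails; there I would bound $(1-\cos v)^p\le2^{-p}|v|^{2p}$ and observe that, after rescaling $v=\Delta w$, the contribution is controlled by $2^{-p}\Delta^{2p+1-\alpha}\ell(\Delta^{-1})^{-1}\int_{|w|<X}|w|^{2p}f_Y(w)\,dw$, whose integral is finite (bounded by $X^{2p}\int_{-\infty}^{\infty}f_Y$) and whose prefactor tends to $0$ because $2p+1-\alpha>0$. Combining the two regions gives $R_\Delta\to S_{p,\alpha}$ and hence the corollary.
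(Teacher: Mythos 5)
Your proof is correct, and its first step (reading the variance of the zero-mean sequence $(1-B)^pY^\Delta$ off the spectral density, giving $2^p\int_{-\pi}^\pi(1-\cos\omega)^pf_\Delta(\omega)\,d\omega$) is exactly the paper's. Where you diverge is in justifying the passage to the limit. The paper stays on $[-\pi,\pi]$: it invokes the sandwich inequalities (\ref{bounds}) already established in the proof of Theorem~\ref{scaling:order}(a) together with dominated convergence, and then converts the periodized sum into the Hurwitz-zeta form via (\ref{sum}); the whole argument is two sentences. You instead unfold the aliasing identity (\ref{spec:sampling}) into a single integral over the real line, rescale, and run Potter's bounds there. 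This buys you two things the paper's version does not make explicit. First, you verify that $S_{p,\alpha}$ is finite and identify $1<\alpha<2p+1$ as precisely the range of finiteness, which the paper never addresses. Second, and more substantively, you isolate and repair the weak point in the Potter-bound step: the bound $\ell(|v|/\Delta)/\ell(1/\Delta)\le C\max(|v|^\delta,|v|^{-\delta})$ requires \emph{both} arguments of $\ell$ to be large, which fails on the shrinking set $|v|<X\Delta$; the paper's inequality (\ref{bounds}) is asserted for all $|2\pi k+\omega|>0$ and silently glosses over this. Your separate estimate on that region, using $(1-\cos v)^p\le 2^{-p}|v|^{2p}$ and the integrability of $f_Y$ to get a contribution of order $\Delta^{2p+1-\alpha}/\ell(\Delta^{-1})\to0$, closes that gap cleanly (the last limit tacitly uses $\Delta^\varepsilon/\ell(\Delta^{-1})\to0$ for slowly varying $\ell$, which is standard and worth a word). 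Net: same skeleton, but your execution is more careful than the paper's and would survive scrutiny that the published two-line version would not.
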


\bproof
By stationarity we have $\bbe[(1-B)^pY^\Delta_{n}]=0$ and, hence $\bbe[((1-B)^pY^\Delta_{n})^2]$ is the variance, of $((1-B)^pY^\Delta_{n})$ which can be calculated as the integral of its spectral density.  Thus
$$\bbe[((1-B)^pY^\Delta_{n})^2]= 2^p \int_{-\pi}^\pi (1-\cos\omega)^p f_\Delta(\omega)d\omega.$$
Using the inequalities (\ref{bounds}) and Lebesgue's dominated convergence theorem, we find that
as $\Delta\rightarrow 0$,
$${{1}\over{\ell(\Delta^{-1})\Delta^{\alpha-1}}}\int_{-\pi}^\pi (1-\cos \omega)^p f_\Delta(\omega)d\omega\rightarrow\int_{-\pi}^\pi (1-\cos\omega)^p\sum_{k=-\infty}^\infty |2k\pi+\omega|^{-\alpha}d\omega,$$
which, with the previous equation and (\ref{sum}), gives the result.
\eproof}

{The kernel of the CMA process (\ref{CMA}) and its spectral density are  linked by the formula,
\begin{equation}\label{specdens}f_Y(\omega)={{1}\over{2\pi}}\int_{-\infty}^\infty e^{-i\omega h}\gamma(h) dh ={{\sigma^2}\over{2\pi}}|\tilde g(\omega)|^2,~\omega\in\Omega_c,\end{equation}where
$${\tilde g}(\omega):=\int_{-\infty}^\infty e^{i\omega t}g(t)dt,$$
 Moreover, it has long been known that local properties of a function imply global properties of its Fourier transform (see e.g. \cite{Titchmarsh}, Theorems 85 and 86).

An Abelian theorem of \cite{AbelFourier} allows us to show, under the conditions of the following proposition,  that CMA processes with regularly varying kernels at the origin have regularly varying spectral densities at infinity.

\begin{prop}\label{abel}
Let $Y$ be a CMA process with kernel $g\in\mathcal{R}_{\nu-1}(0+)$ for $\nu>1/2$.
Assume that the derivatives in 0 satisfy the assumptions\\[2mm]
(\textbf{A1}) \, $g^{(\lfloor\nu\rfloor)}(0+)\neq0$; \\
(\textbf{A2}) \, $g^{(\lfloor\nu-1\rfloor)}\in\mathcal{R}_{\alpha}(0+)$ for $\alpha\in[0,1)$ (with $g^{(-1)}:=\int_0^tg(s)ds$);\\
(\textbf{A3}) \,  For some $x_0>0$,
$$q(u):=\sup_{x\leq x_0}\sup_{0\leq w\leq v\leq 1}\left|\frac{g^{(\lfloor\nu-1\rfloor)}((u+v+w)x)-g^{(\lfloor\nu-1\rfloor)}((u+v)x)-g^{(\lfloor\nu-1\rfloor)}((u+w)x)+g^{(\lfloor\nu-1\rfloor)}(u x) }{g^{(\lfloor\nu-1\rfloor)}(x)}\right|,$$
is bounded and integrable on $[1,\infty).$\\[2mm]
Then
$$ f_Y(|\cdot|)\in\mathcal{R}_{-2\nu}(\infty).$$
\end{prop}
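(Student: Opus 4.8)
The plan is to pass, via the identity \eqref{specdens}, from the spectral density to the Fourier transform $\tilde{g}$ of the kernel, and to show that $|\tilde{g}|$ is regularly varying of index $-\nu$ at infinity. Since $f_Y(|\cdot|)=\frac{\sigma^2}{2\pi}|\tilde{g}(\cdot)|^2$, squaring then gives $f_Y(|\cdot|)\in\mathcal{R}_{-2\nu}(\infty)$ at once. The entire content is therefore an Abelian statement: the local index $\nu-1$ of $g$ at the origin must be transferred into the index $-\nu$ of $\tilde{g}$ at infinity, and the three assumptions are exactly what is needed to invoke the relevant transform theorem.

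First I would reduce the order of the singularity at $0$. Set $k:=\lfloor\nu-1\rfloor$, so that $\alpha:=\nu-1-k\in[0,1)$, which is precisely the index occurring in \textbf{(A2)}. Writing $\tilde{g}(\omega)=\int_0^\infty e^{i\omega t}g(t)\,dt$ (the lower limit is $0$ by causality), repeated integration by parts — read as a single integration against the primitive $g^{(-1)}$ in the degenerate case $k=-1$, i.e. when $1/2<\nu<1$ — yields
\[
\tilde{g}(\omega)=(-i\omega)^{-k}\,\widetilde{g^{(k)}}(\omega).
\]
The boundary terms generated at each step involve $g^{(j)}(0+)$ for $0\le j\le k-1$; since $g^{(j)}\in\mathcal{R}_{\nu-1-j}(0+)$ has positive index $\nu-1-j>0$ for such $j$, all of these vanish, while the contributions at $\infty$ vanish because $g$ and its relevant derivatives decay there ($g$ being square integrable and smooth away from the origin). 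This step isolates the genuinely singular factor $g^{(k)}$, whose index $\alpha$ now lies in $[0,1)$.

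Next I would apply the Abelian theorem of \cite{AbelFourier} to $h:=g^{(k)}$. By \textbf{(A2)} we have $h\in\mathcal{R}_\alpha(0+)$, and \textbf{(A3)} is exactly the regularity (mixed second-difference) hypothesis that controls the oscillation of the kernel $e^{i\omega t}$ and licenses that theorem; \textbf{(A1)}, which asserts $g^{(\lfloor\nu\rfloor)}(0+)=g^{(k+1)}(0+)\neq0$, guarantees that the leading coefficient the theorem extracts is non-zero, so that no cancellation degrades the order. The conclusion is that $\widetilde{g^{(k)}}$ is regularly varying of index $-(\alpha+1)$ at infinity, i.e. $|\widetilde{g^{(k)}}(\omega)|\sim c\,|\omega|^{-(\alpha+1)}\ell(1/|\omega|)$ with $c\neq0$. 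Substituting into the displayed identity gives $|\tilde{g}|\in\mathcal{R}_{-(k+\alpha+1)}(\infty)$, and since $k+\alpha+1=(\nu-1)+1=\nu$ this is $|\tilde{g}|\in\mathcal{R}_{-\nu}(\infty)$; invoking \eqref{specdens} finishes the argument.

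The main obstacle is verifying that $h=g^{(k)}$ meets all the hypotheses of the theorem in \cite{AbelFourier}: because the kernel $e^{i\omega t}$ oscillates rather than being positive, mere regular variation of $h$ at $0$ does not suffice, and one genuinely needs the uniform control of the mixed second differences packaged in \textbf{(A3)} to exclude pathological oscillation of $h$ that would spoil the transform asymptotics. A secondary technical point is the careful bookkeeping of the integration by parts when $\nu$ is not an integer — in particular the case $k=-1$, where one integrates once against the primitive $g^{(-1)}$ instead of differentiating — together with the justification that every boundary contribution at $0$ and at $\infty$ vanishes under the standing square-integrability and smoothness of $g$.
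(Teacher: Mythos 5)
Your overall strategy is the right one and is the same as the paper's at the top level: transfer the local index of $g$ at $0+$ to the index of $\tilde g$ at $\infty$ via the Abelian theorem of \cite{AbelFourier}, then square through \eqref{specdens}. Where you diverge is in trying to carry out the derivative reduction yourself. The paper does none of that: it applies Theorem~2 of \cite{AbelFourier} \emph{directly to $g$} --- conditions (\textbf{A1})--(\textbf{A3}) are simply that theorem's hypotheses, imported verbatim --- obtaining ${\tilde g}(|\omega|)\sim \Gamma(\nu+1)e^{\pm i\nu\pi/2}\int_0^{1/|\omega|}g(s)\,ds$ as $\omega\to\pm\infty$, and then extracts the index $-\nu$ by Karamata's theorem (Theorem~1.5.11(ii) of \cite{BGT}), which gives $\int_0^{1/|\omega|}g(s)\,ds\sim |\omega|^{-1}g(1/|\omega|)/\nu$. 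That Karamata step is where the regular variation of $|\tilde g|$ with the correct slowly varying part is actually established; in your write-up it is replaced by the unproved assertion that $|\widetilde{g^{(k)}}(\omega)|\sim c|\omega|^{-(\alpha+1)}\ell(1/|\omega|)$.

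The hand-rolled integration by parts contains two genuine gaps. First, you claim the boundary terms at $0$ vanish because ``$g^{(j)}\in\mathcal{R}_{\nu-1-j}(0+)$ has positive index'' for $0\le j\le k-1$; but the proposition assumes regular variation only of $g$ and of $g^{(\lfloor\nu-1\rfloor)}$, and regular variation does not pass to derivatives without an additional monotonicity hypothesis (this is precisely why the monotone density theorem, \cite{BGT} Theorem~1.7.2, exists --- see also the remark following the proposition). So the intermediate statements $g^{(j)}(0+)=0$ are not consequences of the stated hypotheses. Second, you dismiss the boundary terms at $\infty$ by appealing to square integrability and ``smoothness away from the origin''; square integrability gives no pointwise decay of $g$ or its derivatives, and no smoothness or decay of derivatives at infinity is assumed, so $\widetilde{g^{(k)}}$ need not even be a convergent integral. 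Both issues are exactly the technicalities that the hypotheses of Theorem~2 of \cite{AbelFourier} (in particular the second-difference condition (\textbf{A3})) are designed to absorb, which is why the paper invokes that theorem as a black box for $g$ itself rather than reconstructing its proof. Your index bookkeeping $-k-(\alpha+1)=-\nu$ and the final squaring step are correct and agree with the paper.
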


\begin{proof}
Under conditions \emph{(\textbf{A1})}-\emph{(\textbf{A3})} we can apply Theorem~2 of \cite{AbelFourier}, which yields
\begin{equation}\label{ABELFourier}{\tilde g}(|\omega|)\sim  \Gamma(\nu+1)e^{\pm i\nu\pi/2}\int_0^{1/|\omega|}g(s)ds,\quad \omega\rightarrow \pm\infty.
\end{equation}
Moreover, Karamata's  theorem (Theorem~1.5.11(ii) in \cite{BGT})
gives
$$\int_{0}^{1/|\omega|}g(s)ds=\int_{|\omega|}^\infty{s^{-2}g(1/s)ds}\sim |\omega|^{-1} g(1/|\omega|)/\nu, \quad \omega\rightarrow\pm\infty$$
where we used the fact that $g(\cdot)\in\mathcal{R}_{\nu-1}(0+)$ means $g(1/\cdot)\in\mathcal{R}_{-\nu+1}(\infty)$.

Substituting \eqref{ABELFourier} into \eqref{specdens} and recalling that $\Gamma(\nu+1)=\nu\Gamma(\nu)$, we obtain
$$ f_Y(|\omega|) =\frac{1}{2\pi} |F(g)|^2(\omega)\sim\frac{\Gamma^2(\nu)}{2\pi} |\omega|^{-2}g^2(1/|\omega|), \quad \omega\rightarrow\pm\infty,$$
which gives the desired result.
\end{proof}

\brem \rm
Condition (\emph{\textbf{A2}}) can be replaced by a monotonicity condition on the derivative $g^{(\lfloor\nu\rfloor)}(\cdot)$ near the origin, so that the monotone density theorem (\cite{BGT}, Theorem~1.7.2.) can be applied.
\erem

\bexam\label{oss:carma} [The $\CARMA(p,q)$ process]\\
The $\CARMA(p,q)$ process $Y$ defined by \eqref{1.1} has spectral density $f_Y(\omega)=\sigma^2|b(i\omega)|^2/(2\pi |a(i\omega)|^2)$, which clearly has the form
$$
f_Y(\omega)=|\omega|^{-\alpha}\ell(|\omega|), ~\omega\in\mathbb{R},
$$where $\alpha=2(p-q)$ and $\lim_{\omega\rightarrow\infty}\ell(|\omega|)=\sigma^2/(2\pi)$.
Hence, by Theorem~\ref{scaling:order}(c), the white noise variance
in the Wold representation of $Y^\Delta$ satisfies as $\Delta\rightarrow 0$,
\begin{equation}\label{CARMAWNV}
\sigma_{\Delta}^2\sim \sigma^2C_{2(p-q)}\Delta^{2(p-q)-1},
\end{equation}
where $C_{2(p-q)}$ can be calculated from (\ref{coefficients}).  However $C_{2(p-q)}$ can also be
calculated from (\ref{sigmasquared}) as $C_{2(p-q)}=[(2(p-q)-1)!\prod_{i=1}^{p-q-1}\eta(\xi_i)]^{-1}$,
where $\eta(\xi_i)$ was defined in (\ref{eta1}).
Theorem~\ref{scaling:order}(b) implies that  the spectral density of $\Delta^{q-p+1/2}(1-B)^{p-q}Y^\Delta$ converges to that of a short memory stationary
process.  From Theorem \ref{MArepresentation1} we get the more precise result that the spectral density of
$C^{1/2}_{2(p-q)}\Delta^{q-p+1/2}(1-B)^{p-q}\prod_{i=1}^q(1+\eta(\xi_i)B)^{-1}Y^\Delta$ converges to that of white noise with variance $\sigma^2$.
\eexam

\bexam\label{ex:ficarma}[The $\FICARMA(p,d,q)$ process]\\
The fractionally integrated causal \FICARMA$(p,d,q)$ process (\citet{BroMar05}) has spectral density
\begin{equation}\label{fi:spec}
f_Y(\omega)=\frac{\sigma^2}{2\pi}\frac{1}{|\omega|^{2d}}\left|\frac{b(i\omega)}{a(i\omega)}\right|^2,\quad \omega\in\Omega_c,
\end{equation}
with $a(\cdot)$ and $b(\cdot)$ as in  \eqref{1.1} and $0<d<0.5$.  Hence
$$\label{carma:spec}
f_Y(\omega)=|\omega|^{-\alpha}\ell(|\omega|),\quad \omega\in\Omega_c,
$$where $\alpha=2(p+d-q)$ and $\lim_{\omega\rightarrow\infty}\ell(|\omega|)=\sigma^2/(2\pi)$.  The spectral density (\ref{fi:spec})
has a singularity at frequency $0$ which gives rise to the slowly decaying autocorrelation function associated with long memory.
Applying Theorem \ref{scaling:order}(c) as in Example \ref{oss:carma}, the white noise variance
in the Wold representation of $Y^\Delta$ satisfies as $\Delta\rightarrow 0$
\begin{equation}\label {FICARMAWNV}
\sigma_{\Delta}^2\sim \sigma^2C_{2(p+d-q)}\Delta^{2(p+d-q)-1},
\end{equation}
where $C_{2(p+d-q)}$ can be calculated from (\ref{coefficients}).  As $\Delta\rightarrow 0$, the asymptotic spectral density $f_\Delta$ of $Y^\Delta$ is given by (\ref{slowv:sample}) with $\alpha=2(p+d-q)>1$ and is therefore not integrable for any $\Delta>0$.
However Theorem \ref{scaling:order}(b) implies that  the spectral density of $\Delta^{q-p-d+1/2}(1-B)^{p+d-q}Y^\Delta$ converges to that of a short memory stationary process.
\eexam

Our next two examples are widely used in the modelling of turbulence.  Kolmogorov's famous 5/3 law (see \cite{frisch} Section 6.3.1, \cite{pope} Section 6.1.3) suggests a regularly varying spectral density model for turbulent flows.

\bexam\label{kaimal}[Two turbulence models]\\
Denote by $\ov U$ the mean flow velocity, with $\ell$ the integral scale parameter and define $\ov\ell=\ell/\ov U$.\\
(i) \, The \cite{karman} spectrum models the isotropic energy spectrum.
Its spectral density is, for $C, c_\ell\in\bbr$, given by
$$
f_Y(\omega)=C{\ov U}^{-2/3}|\omega|^{-5/3}\left(\frac{\omega^2}{{\omega^2+{c_\ell}/{{\ov\ell}^2}}}\right)^{17/6},\quad\omega \in \Omega_c.
$$
Moreover, $f_Y\in\calr_{-5/3}$, so it has a representation \eqref{reg:density} and the conclusions of Theorem~\ref{scaling:order} hold with $\al=5/3$.
 \\[2mm]
(ii) \, The Kaimal spectrum for the longitudinal component of the energy spectrum is the current standard of the International Electrotechnical Commission; cf.~\cite{IEC}.
%A similar approach has been taken up by \cite{intermit} recently.
%We will come back to this issue in \cite{fk:2011:1}.
The spectral density is given by
\begin{equation}
f_Y(\omega)=v\frac{4\ov\ell}{(1+6\ov\ell\omega)^{5/3}},\quad \omega\in\Omega_c,
\end{equation}
where $v$ is the variance of $Y$.
Moreover, $f_Y\in\calr_{-5/3}$, so it has a representation \eqref{reg:density} and the conclusions of Theorem~\ref{scaling:order} hold with $\al=5/3$.
\eexam

\bexam\label{ga:ker} [The gamma kernel]\\
{The gamma kernel $g$ defined in \eqref{innovations:algo:cons} belongs to $\calr_{\nu-1}(0+)$ and satisfies the assumptions of Proposition~\ref{abel}.}
Its Fourier transform is ${\tilde g}(\omega)=\Gamma(\nu) (\lambda -i \omega)^{-\nu }$.  If $Y$ has the kernel $g$, then from  (\ref{specdens}) its spectral density is
$$f_Y(\omega)=\frac{\sigma^2}{2\pi}|{\tilde g}(\omega)|^2=\frac{\sigma^2}{2\pi}\frac{\Gamma^2(\nu)}{ (\lambda^2 + \omega^2)^{\nu }}
=\omega^{-2\nu}\frac{\sigma^2\Gamma^2(\nu)}{ 2\pi \left((\lambda/\omega)^2 +1\right)^{\nu }},\quad\omega\in\Omega_c.$$
which belongs to $\calr_{-2\nu}(\infty)$ with slowly varying function $\ell$ such that $\lim_{\omega\to\infty}\ell(\omega)=\sigma^2 \Gamma^2(\nu)/2\pi$.\\[2mm]
Note that if $\nu=5/6$, then $f_Y$, like the von K\'arm\'an spectral density of Example \ref{kaimal} (i), decays as $\omega^{-5/3}$ for $\omega\rightarrow\infty$, in accordance with Kolmogorov's $5/3$ law.\\[2mm]
Theorem \ref{scaling:order} gives the asymptotic form of the spectral density of the sequence $\{(1-B)^\nu Y_n^\Delta\}_{n\in\bbz}$ as $\Delta\rightarrow 0$,
\beao
h^\Delta(\omega) &\sim& \sigma^2\Gamma^2(\nu)(2\pi)^{-1}2^{\nu}\Delta^{2\nu-1}(1-\cos\omega)^{\nu}~\times\\
&&\left[|\omega|^{-2\nu}+(2\pi)^{-2\nu}\zeta\left(2\nu,1-\frac{\omega}{2\pi}\right)+(2\pi)^{-2\nu}\zeta\left(2\nu,1+\frac{\omega}{2\pi}\right)\right],\quad\omega\in\Omega_d.
\eeao
{The second-order structure function, $S_2(\Delta):=\bbe[(Y_\Delta-Y_0)^2]= 2 \gamma_Y(0) (1-\rho_Y(\Delta))$, plays an important role in the physics of turbulence. For the gamma kernel, $\gamma_Y(0)$ and $\rho_Y(h)$ were specified in Example \ref{ga:ker}.
Using those expressions and  the asymptotic behaviour as $\Delta\rightarrow 0$ of ${K}_{\nu-1/2}(\Delta)$ (see \cite{abramowitz}, Section 9.6), we obtain the asymptotic formulae,
\begin{align*}
{{S_2(\Delta)}\over{2\gamma_Y(0)}}=
\left\{\begin{array}{lr}
2^{1-2\nu}\displayfrac{\Gamma(3/2-\nu)}{\Gamma(\nu+1/2)}(\lambda\Delta)^{2\nu-1}+O(\Delta^{2}),&1/2<\nu<3/2,\\
&\\
\frac{1}{2}(\lambda\Delta)^2|\log\Delta|+O(\Delta^{3}),&\nu=3/2,\\
&\\
\displayfrac{1}{4(\nu-3/2)}(\lambda\Delta)^2+O(\Delta^{2\nu-1}),&\nu>3/2,
\end{array} \right.
\end{align*}
which can be found in \cite{pope}, Appendix G, and \cite{limit3}.  The first of these formulae can also be obtained as a special
case of Corollary \ref{corol:incr} with $p=1$.
%The case $1/2<\nu<3/2$  is covered by Corollary \ref{corol:incr} with $p=1$.
%Note that for $\nu\geq3/2$ the limit process is non-stationary.
}
\eexam

\section{Conclusions}
We studied the behaviour of the sequence  of observations $Y^\Delta$ obtained when a CMA process of the form \eqref{CMA}
is observed on a grid with spacing $\Delta$ as $\Delta\rightarrow 0$.

In the particular case when $Y$ is a CARMA process we obtained a more refined asymptotic representation of
the sampled process than that found by Brockwell et al. (2012) and used it to show the pointwise convergence
as $\Delta\rightarrow 0$ of a sequence of functions defined in terms of the Wold representation of the sampled process
to the kernel $g$.  This suggested a non-parametric approach to the estimation of $g$ based on estimation of the
coefficients and white noise variance of the Wold representation of the sampled process.

For a larger class of CMA processes we found results analogous to those of Brockwell et al. (2012) and examined their
implications for the local second-order properties of such processes, which include in particular fractionally integrated CARMA processes.

Finally we applied the non-parametric procedure for estimating $g$ to simulated and real data with positive results.

\subsection*{Acknowledgment}
P.J.B. gratefully acknowledges the support of the NSF Grant DMS-1107031 and, together with C.K., financial support of the Institute for Advanced Studies of the Technische Universit\"at M\"unchen (TUM-IAS).
V.F. would like to thank Ole Barndorff-Nielsen and J\"urgen Schmiegel for interesting discussions during the very enjoyable period spent at Aarhus University during February 2010.
Furthermore, he would like to thank Richard Davis from Columbia University for the many fruitful discussions during R.D.'s visits as Hans Fischer Senior Fellow of the TUM-IAS. The work of V.F. was supported by the International Graduate School of Science and Engineering (IGSSE) of the Technische Universit\"at M\"unchen.  We are grateful to an Associate Editor and two referees for their valuable comments which led to substantial improvement of the presentation of the results.

\bibliographystyle{kluwer}
\bibliography{bib}

\bigskip

\section*{Appendix}

\noindent {\bf \large Proof of Theorem \ref{MArepresentation1}}\\
It follows from (\ref{sampledCARMAspec}) that the spectral density $f_\Delta(\omega)$ of the sampled CARMA process $Y^\Delta$ is $-\sigma^2/(2\pi)$ times the sum of the residues at the singularities of the integrand
in the left half-plane, or more simply $\sigma^2/(4\pi)$ times the residue of the integrand at $\infty$, which is much simpler to calculate.
Thus,
$$f_\Delta(\omega)={{\sigma^2}\over{4\pi}}{\rm Res}_{z=\infty}\left[{{b(z)b(-z)}\over{a(z)a(-z)}} {{\sinh(\Delta z)}\over{\cosh(\Delta z)-\cos(\omega)}}\right],\quad -\pi\le\omega\le\pi.$$
The spectral density can also be expressed as a power series,
\begin{equation}\label{PSsampledCARMA}
f_\Delta(\omega)=\frac{\si^2}{4\pi}\sum_{j=0}^\infty{{\sigma^2\Delta^{2j+1}}}r_jc_j(\omega),\quad -\pi\le\omega\le\pi,
\end{equation}
where
$c_k(\omega)$ is the coefficient of $z^{2k+1}$ in
$$\sum_{k=0}^\infty c_k(\omega) z^{2k+1}={{\sinh z}\over{\cosh z-\cos \omega}},$$and
$$r_j:= {\rm Res}_{z=\infty}\left[z^{2j+1}{{b(z)b(-z)}\over{a(z)a(-z)}}\right],$$
i.e. the coefficient of $z^{2j}$ in the power series expansion,
\begin{equation}\label{rj}
\sum_{j=0}^\infty r_jz^{2j}=(-z^2)^{p-q-1}{{\prod_{i=1}^q (1-\mu_i^2z^2)}\over{\prod_{i=1}^p (1-\lambda_i^2 z^2)}},\end{equation}where
 $a(z)=\prod_{i=1}^p(z-\lambda_i)$ and $b(z)=\prod_{i=1}^q(z-\mu_i)$.
%The power series (\ref{PSsampledCARMA}) is the required expansion for $f_\Delta$.
Denoting by $f_{MA}$ the spectral density of the moving average, $X_n:=\theta^\Delta(B)Z^\Delta_n$, we find from (\ref{ARMA}) that
$$f_{MA}(\omega)=2^p e^{-a_1\Delta}f_\Delta(\omega)\prod_{j=1}^p (\cosh(\lambda_j\Delta)-\cos(\omega)), ~~-\pi\le\omega\le \pi$$
and hence, by (\ref{PSsampledCARMA}),
$$f_{MA}(\omega)={{2^p\sigma^2e^{-a_1\Delta}}\over{4\pi }}
\prod_{i=1}^p\Big(1-\cos\omega+\sum_{j=1}^\infty {{(\lambda_i\Delta)^{2j}}\over{(2j)!}}\Big)
\sum_{k=0}^\infty r_k c_k(\omega)\Delta^{2k+1},\quad -\pi\le\omega\le\pi.$$
This expression can be simplified by reexpressing it in terms of $x:=1-\cos\omega$.
Thus
\begin{equation}\label{PSfMA}
f_{MA}(\omega)={{2^p\sigma^2e^{-a_1\Delta}}\over{4\pi }}
\prod_{i=1}^p\Big(x+\sum_{j=1}^\infty {{(\lambda_i\Delta)^{2j}}\over{(2j)!}}\Big)
\sum_{k=0}^\infty r_k \alpha_k(x)\Delta^{2k+1},
\end{equation}
where $\alpha_k(x)$ is the coefficient of $z^{2k+1}$ in the expansion,
$$\sum_{k=0}^\infty \alpha_k(x) z^{2k+1}={{\sinh z}\over{\cosh z-1+x}}.$$
In particular $\alpha_0(x)=1/x, \alpha_1(x)=(x-3)/(3!x^2)$ and $\alpha_2(x)=(x^2-15x+30)/(5!x^3)$.
More generally, $\alpha_k(x)$ has the form.
\begin{equation}\label{alphaj}\alpha_k(x)={{1}\over{(2k+1)!x^{k+1}}}\prod_{i=1}^k(x-\xi_{k,i}),\end{equation}
where
\begin{equation}\label{product}
\prod_{i=1}^k\xi_{k,i}= (2k+1)!\,2^{-k},
\end{equation}
and the product, when $k=0$, is defined to be $1$.
Since $\alpha_{p-q-1}(x)$ plays a particularly important role in what follows, we shall relabel it as $\alpha(x)$ and denote its zeroes more simply as
$$\xi_i:=\xi_{p-q-1,i}, ~~i=1, \ldots,p-q-1.$$
From (\ref{PSfMA}), with the aid of (\ref{rj}) and (\ref{alphaj}), we can now derive an asymptotic approximation to and factorization of $f_{MA}(\omega)$.
Observe first that the expression on the right of (\ref{PSfMA}), in spite of its forbidding appearance, is in fact a polynomial in $x$ of degree less than $p$.
We therefore
collect together the coefficients of $x^{p-1}, x^{p-2}, \ldots, x^0$.  This gives (using the identity (\ref{product})) the asymptotic expression as $\Delta\rightarrow 0$,
\begin{equation}\label{PSfMA2}
f_{MA}(\omega)={{2^p\sigma^2e^{-a_1\Delta}\Delta^{2(p-q)-1}}\over{4\pi }}\Big[x^pr_{p-q-1}\alpha(x)+o(1)   +\sum_{j=1}^q\rho_jx^{q-j}\Delta^{2j}
\Big],\end{equation}with
$$\rho_j= (-2)^{-(p-q-1+j)}\Big[r_{p-q-1+j}-r_{p-q-2+j}\sum_{i=1}^p\lambda_i^2\Big]+o(1)$$
$$\hskip -1.1in=2^{-(p-q-1+j)}\sum\mu_{i_1}^2\ldots\mu_{i_j}^2 +o(1),$$where the second line follows from (\ref{rj}) and the sum on
the second line is over all subsets of size $j$ of the $q$ zeroes of the polynomial $b(z)$.
Replacing $r_{p-q-1}$ in (\ref{PSfMA2}) by $(-1)^{p-q-1}$, substituting for $\alpha(x)=\alpha_{p-q-1}(x)$ from (\ref{alphaj}) and using the continuity of the zeroes of a
polynomial as functions of its coefficients,  we can rewrite (\ref{PSfMA2}) (recalling that $x:=1-\cos\omega$ and $\xi_i, i=1,\ldots,p-q-1$ are the zeroes of $\alpha(x)$) as
\begin{equation}\label{finalPSfMA} f_{MA}(\omega)={{\Delta(-\Delta^2)^{p-q-1}2^p\sigma^2e^{-a_1\Delta}}\over{[2(p-q)-1]!4\pi}} \prod_{i=1}^{p-1-q}\left[x-\xi_i(1+o(1))\right] \prod_{k=1}^q\left[x+{{\mu_k^2\Delta^2}\over{2}}(1+o(1))\right].\end{equation}
To complete the factorization of $f_{MA}(\omega)$, observe that we can write
\begin{equation}\label{factor1}x+{{\mu_k^2\Delta^2}\over{2}}(1+o(1))={{1}\over{2\zeta_k}}[1-(\zeta_k+o(\Delta))e^{-i\omega}][1-(\zeta_k+o(\Delta)e^{i\omega}],\end{equation}
where
\begin{equation}\label{zeta} \zeta_k=1+\mu_k\Delta. \end{equation}
Similarly we can write
\begin{equation}\label{factor2}x-\xi_i(1+o(1))=-{{1}\over{2\eta(\xi_i)}}[1+(\eta(\xi_i)+o(1))e^{-i\omega}][(1+(\eta(\xi_i)+o(1))e^{i\omega}],\end{equation}where
\begin{equation}\label{eta} \eta(\xi_i)=\xi_i-1\pm{\sqrt{(\xi_i-1)^2-1}},\end{equation}and the sign is chosen so that $|\eta(\xi_i)|< 1$.
Substituting (\ref{factor1}) and (\ref{factor2}) in (\ref{finalPSfMA}) immediately gives the corresponding asymptotic MA representation of $X_n$ of Theorem 2.1.
This completes the proof.
\halmos

\vskip .2in\noindent {\bf \large Proof of Theorem~\ref{clt}}\\
Without loss of generality, we assume that $\Delta<1$.  We assume also, as in Section 4, that $\sigma^2=1$.
Then the error of the innovations estimator, given by Corollary \ref{4.2}, is
\begin{equation*}
{\wh g}^\Delta(t)-g(t)=g^\Delta(t)-g(t)+\epsilon_n(t).
\end{equation*}
We multiply both sides by $\sqrt{n\Delta}$, obtaining
\begin{equation}\label{asym:ex:innov}\sqrt{n\Delta}({\wh g}^\Delta(t)-g(t))=\sqrt{n\Delta}(g^\Delta(t)-g(t))+\sqrt{n\Delta}\epsilon_n(t).\end{equation}
In order to prove our result, we need to ensure that, as $n\rightarrow\infty$ with $\Delta(n)$ satisfying the conditions specified in the statement of the theorem, (i) the first term on the right of \eqref{asym:ex:innov} converges to zero and (ii) the last term converges in distribution to a normal random variable with variance $\int_0^t g^2(u) du$.  The proofs follow.

(i)
Note first that
\begin{equation}\label{as:rep}
g^\Delta(t)-g(t)=\sum_{r=1}^p\left[C(r,\Delta)-\frac{b(\lambda_r)}{a'(\lambda_r)}\right]e^{\lambda_r t},
\end{equation}
where $C(r,\Delta)$ is the coefficient obtained plugging \eqref{psijdelta} and \eqref{sigmasquared} into \eqref{kernel:estimation:formula}.
The function $C(r,\Delta)$ is a rational bounded function, whose parameters depend continuously on $\Delta$.
Therefore we can write the series expansion
\begin{equation}\label{coef:series:exp}
C(r,\Delta)=\sum_{k=0}^\infty\bar{c}_k(r)\Delta^k
\end{equation}
where, from Theorem \ref{Woldapprox}(ii),  $\bar{c}_0(r)=b(\lambda_r)/a'(\lambda_r)$.
This implies that
$$C(r,1)= \bar{c}_0(r)+\sum_{k=1}^\infty \bar{c}_k(r)<\infty,\quad 1\leq r\leq p.$$
Then the deterministic part of \eqref{asym:ex:innov} can be written as
\begin{align*}\sqrt{n\Delta}\left|{g}^\Delta(t)-g(t)\right|&=\left|\sum_{r=1}^p\left(\sqrt{n}\sum_{k=1}^\infty \bar{c}_k(r)\Delta^{k+1/2}\right)e^{\lambda_r t}\right|.\\
&\leq\sum_{r=1}^p\left|\sqrt{n}\sum_{k=1}^\infty \bar{c}_k(r)\Delta^{k+1/2}\right|\left|e^{\lambda_r t}\right|.
\end{align*}
Since $\Delta<1$, for $k\geq 1$
$$0<\sqrt{n}\Delta^{k+1/2}= \Delta^{k-1} \sqrt{n}\Delta^{3/2} \leq \sqrt{n}\Delta^{3/2}.$$
Therefore,
$$0<\sqrt{n\Delta}|{g}^\Delta(t)-g(t)|\leq\Delta^{3/2}\sqrt{n}\sum_{r=1}^p\left|\sum_{k=1}^\infty \bar{c}_k(r)\right|\left|e^{\lambda_r t}\right|\rightarrow 0$$
if  $n(\Delta(n))^3\rightarrow 0$ as $n \rightarrow \infty$.

\vskip .1in
(ii) For fixed $\Delta>0$ Corollary \ref{4.2} implies that  $\sqrt{n\Delta}\epsilon_n(t)\Rightarrow N(0,a_\Delta\sigma_\Delta^2)$ as $n\rightarrow\infty$.
We shall show now that $\lim_{\Delta\rightarrow 0}a_\Delta\sigma_\Delta^2 = \int_0^t g^2(u) du$.
Then it follows that if $\Delta$ depends on $n$ in such a way that $\Delta(n)\rightarrow 0$ and $n\Delta(n)\rightarrow\infty$ as $n\rightarrow\infty$, we have the convergence in distribution,
$$\sqrt{n\Delta(n)}\epsilon_n(t)\Rightarrow N(0,\int_0^t g^2(u) du),~~{\rm as}~n\rightarrow\infty,$$
which, with (i), completes the proof of the theorem.

Now $a_\Delta\sigma_\Delta^2$ is the mean squared error of the best linear predictor of $Y_{\Delta\lfloor t/\Delta\rfloor}$  based on $Y_{k\Delta}, k=0,-1,-2,\ldots $,
and $\int_0^{\Delta\lfloor t/\Delta\rfloor} g^2(u)du$ is the mean squared error of the best linear predictor of $Y_{\Delta\lfloor t/\Delta\rfloor}$  based on $Y_t, t\in (-\infty,0]$.
The mean-square continuity of $Y$ means that the difference converges to zero as $\Delta\rightarrow 0$, which in turn implies that
$$a_\Delta\sigma_\Delta^2-\int_0^t g^2(u)du \rightarrow 0  ~~{\rm as}~\Delta\rightarrow 0.$$\vskip -.3in\halmos

\vskip .2in\noindent {\bf \large Proof of Theorem 6.2}

\noindent (a) \, If $Y$ is the CMA process (\ref{CMA}) then the spectral density of the sampled process $Y^\Delta$ is given (Bloomfield (2000), p.~196, Eq.~9.17) by
\begin{equation}\label{spec:sampling}
f_\Delta(\omega)=\frac1{\Delta}\sum_{k=-\infty}^\infty f_Y\Big(\frac{\omega+2k\pi}{\Delta}\Big),\quad \omega\in\Omega_d.
\end{equation}
Since $f_Y$ is positive, Eq.~\eqref{spec:sampling} can be rewritten as
\begin{equation}\label{spec:sampling2}
f_\Delta(\omega)=\Delta^{-1} f_Y(\Delta^{-1})\sum_{k=-\infty}^\infty\frac{f_Y(|\omega +2\pi k|\Delta^{-1})}{ f_Y(\Delta^{-1})},\quad \omega\in\Omega_d.
\end{equation}
Each of the summands converges by regular variation to $|\omega +2\pi k|^{-\al}$.
It remains to show that we can interchange the infinite sum with this limit.
Invoking the Potter bounds (Theorem~1.5.6 (iii) of \cite{BGT}),
for every $\epsilon>0$ there exists a $\Delta_\epsilon$,  such that for all $\Delta \leq \Delta_\epsilon$ and $|2\pi k+\omega|>0$
\begin{equation}\label{bound:s}
(1-\epsilon) |2\pi k+\omega|^{-\alpha-\epsilon}<\frac{f_Y(|\omega +2\pi k|\Delta^{-1})}{ f_Y(\Delta^{-1})}< (1+\epsilon) |2\pi k+\omega|^{-\alpha+\epsilon}.
\end{equation}
We take $\epsilon>0$ such that $\alpha-\epsilon>1$.  Then, using \eqref{bound:s}, we can bound \eqref{spec:sampling2} as follows:
\begin{equation}\label{bounds}(1-\epsilon)\frac{f_Y(\Delta^{-1})}{\Delta}\sum_{k=-\infty}^{\infty} |2\pi k+\omega|^{-\alpha-\epsilon}< f_\Delta(\omega)< (1+\epsilon) \frac{f_Y(\Delta^{-1})}{\Delta}\sum_{k=-\infty}^{\infty}|2\pi k+\omega|^{-\alpha+\epsilon},\quad \omega\in\Omega_d.\end{equation}
Since  $\epsilon$ can be chosen arbitrarily small, we conclude that as $\Delta\rightarrow 0$
$$f_\Delta(\omega)\sim \frac{f_Y(\Delta^{-1})}{\Delta}\sum_{k=-\infty}^\infty{|\omega+2k\pi |^{-\alpha}},\quad \omega\in\Omega_d.$$
We can rewrite the sum above as
%\beao
\begin{align}\label{sum}
\sum_{k=-\infty}^\infty{|\omega+2k\pi |^{-\alpha}}
&= (2\pi)^{-\alpha}\sum_{k=-\infty}^\infty{\left|\frac{\omega}{2\pi}+k \right|^{-\alpha}}\notag\\
&= |\omega|^{-\alpha}+(2\pi)^{-\alpha} \sum_{k=0}^\infty\left[\left(k+1-\frac{\omega}{2\pi}\right)^{-\alpha}+\left(k+1+\frac{\omega}{2\pi}\right)^{-\alpha}\right],\quad \omega\in\Omega_d.
\end{align}
%\eeao
From this and the definition of  $\zeta$  we obtain \eqref{slowv:sample}.\\[2mm]
(b)\,  We first note that the Hurwitz zeta function $\zeta(-\alpha,1\pm\omega/2\pi)$ is bounded and strictly positive for all $\omega\in\Omega_d$, therefore, its integral over $[-\pi,\pi]$ is positive and finite.
{ On the other hand, since $\alpha>1$, the term $\omega^{-\alpha}$ is not integrable over $[-\pi,\pi]$.
However, the differenced} sequence $(1-B)^{\alpha/2 }Y^\Delta$,  has spectral density
\begin{equation}\label{filt:process}
h^\Delta(\omega)=2^{\alpha/2 } (1-\cos \omega)^{{\alpha/2 } }f_\Delta(\omega),\quad \omega\in\Omega_d.
\end{equation}
As $\Delta\rightarrow 0$ we can write, for $\omega\in\Omega_d$, by \eqref{slowv:sample}
\beao
h^\Delta(\omega)&\sim& 2^{\alpha/2 } (1-\cos \omega)^{{\alpha/2 } } \ell(\Delta^{-1})\Delta^{\alpha-1}\times\\
&&\left[|\omega|^{-\alpha}+(2\pi)^{-\alpha}\zeta\left(\alpha,1-\frac{\omega}{2\pi}\right)+
(2\pi)^{-\alpha}\zeta\left(\alpha,1+\frac{\omega}{2\pi}\right)\right].
\eeao The right hand side is integrable over $[-\pi,\pi]$ and
bounded in a neighbourhood of the origin, since
$2^{\alpha/2}(1-\cos \omega)^{\alpha/2
}\omega^{-\alpha}\rightarrow 1$ as $\omega\rightarrow0$.
Thus we conclude that the spectral density of the rescaled differenced sequence \eqref{rescaled} converges to that of a short-memory stationary process.\\[2mm]
(c) \,{ It is easy to check that the sampled $\CMA$ process has a Wold representation of the form (\ref{Wold1}) and that its one-step prediction mean-squared error based on the infinite past is $\sigma_\Delta^2$.  Kolmogorov's formula
(see, e.g., Theorem~5.8.1 of \cite{BD}) states that the one-step prediction mean-squared error for a discrete-time stationary process with spectral density $f$ is
\begin{equation}\label{Kolmogorov} \tau^2=2\pi\exp\left\{\frac{1}{2\pi}\int_{-\pi}^{\pi}\log f(\omega)d\omega\right\} \end{equation}
Applying it to the {\it differenced} process we find that its one-step prediction mean-squared error is
\beao
\lefteqn{2\pi\exp\left\{\frac{1}{2\pi}\int_{-\pi}^{\pi}\log h^\Delta(\omega)d\omega\right\}}\\
&=&2\pi\exp\left\{\frac{\alpha }{4\pi}\int_{-\pi}^{\pi}\log(2-2\cos\omega)d\omega\right\}\times \exp\left\{\frac{1}{2\pi}\int_{-\pi}^{\pi}\log f_\Delta(\omega)d\omega\right\} \, = \, \sigma^2_\Delta.
\eeao
Hence the differenced sequence has the same one-step prediction mean-squared error as $Y^\Delta$ itself.
Since from (\ref{bounds}), as $\Delta\rightarrow 0$,
$$\log f_\Delta(\omega)-\log(\ell(\Delta^{-1})\Delta^{\alpha-1})-\log\Big[ \sum_{-\infty}^\infty|2\pi k+\omega|^{-\alpha}\Big] \, \rightarrow 0 $$
pointwise on $\Omega_d$, and since the left side is dominated by an integrable function on $\Omega_d$, we conclude from the dominated convergence theorem that, as $\Delta\rightarrow 0$,
$${{1}\over{ \ell(\Delta^{-1})\Delta^{\alpha-1}}}\exp\left\{{{1}\over{2\pi}}\int_{-\pi}^\pi \log f_\Delta(\omega)d\omega\right\}\rightarrow \exp\left\{{{1}\over{2\pi}}\int_{-\pi}^\pi\log\left[\sum_{-\infty}^\infty |2\pi k+\omega|^{-\alpha}d\omega\right] \right\},$$
which, with (\ref{sum}) and  (\ref{Kolmogorov}), shows that as $\Delta\rightarrow 0$,
\begin{equation}\sigma^2_\Delta\sim 2\pi C_\alpha \ell\left(\Delta^{-1}\right) \Delta^{\alpha-1}.
\end{equation}}
This completes the proof.
\halmos

\end{document}